\documentclass[12pt,reqno]{amsart}
\usepackage{mathrsfs}
\usepackage[breaklinks]{hyperref}
\usepackage[headheight=110pt,top=1.25in, bottom=1.25in, left=1in, right=1in]{geometry}

\usepackage{url}
\usepackage{amssymb}
\usepackage{amsmath}
\allowdisplaybreaks

\renewcommand{\Re}{\operatorname{Re}}

\newcommand{\g}{\gamma}

\newcommand{\z}{\zeta}

\renewcommand{\[}{\left\[}
\renewcommand{\]}{\right\]}
\newcommand{\Z}{\mathbb{Z}}
\newcommand{\K}{\mathbb{K}}

\newcommand{\C}{\mathbb{C}}

\renewcommand{\ge}{\geqslant}

\numberwithin{equation}{section}
\theoremstyle{plain}
\newtheorem{theorem}{Theorem}[section]
\newtheorem{lemma}[theorem]{Lemma}
\newtheorem{corollary}[theorem]{Corollary}
\newtheorem{proposition}[theorem]{Proposition}

\makeatletter
\def\proof{\@ifnextchar[{\@oproof}{\@nproof}}
\def\@oproof[#1][#2]{\trivlist\item[\hskip\labelsep\textit{#2 Proof of\
		#1.}~]\ignorespaces}
\def\@nproof{\trivlist\item[\hskip\labelsep\textit{Proof.}~]\ignorespaces}

\makeatother

\makeatletter
\def\@tocline#1#2#3#4#5#6#7{\relax
	\ifnum #1>\c@tocdepth 
	\else
	\par \addpenalty\@secpenalty\addvspace{#2}%
	\begingroup \hyphenpenalty\@M
	\@ifempty{#4}{%
		\@tempdima\csname r@tocindent\number#1\endcsname\relax
	}{%
		\@tempdima#4\relax
	}%
	\parindent\z@ \leftskip#3\relax \advance\leftskip\@tempdima\relax
	\rightskip\@pnumwidth plus4em \parfillskip-\@pnumwidth
	#5\leavevmode\hskip-\@tempdima
	\ifcase #1
	\or\or \hskip 1em \or \hskip 2em \else \hskip 3em \fi%
	#6\nobreak\relax
	\dotfill\hbox to\@pnumwidth{\@tocpagenum{#7}}\par
	\nobreak
	\endgroup
	\fi}
\makeatother

\usepackage{bigints}
\usepackage{suffix}
\usepackage{mathtools}
\DeclarePairedDelimiterX\MeijerM[3]{\lparen}{\rparen}%
{\begin{smallmatrix}#1 \\ #2\end{smallmatrix}\delimsize\vert\,#3}

\newcommand\MeijerG[8][]{%
	G^{\,#2,#3}_{#4,#5}\MeijerM[#1]{#6}{#7}{#8}}

\WithSuffix\newcommand\MeijerG*[7]{%
	G^{\,#1,#2}_{#3,#4}\MeijerM*{#5}{#6}{#7}}

\usepackage{color}
\usepackage{amsmath}

\definecolor{blue}{rgb}{0,0,1}
\definecolor{red}{rgb}{1,0,0}
\definecolor{green}{rgb}{0,.6,.2}
\definecolor{purple}{rgb}{1,0,1}

\numberwithin{theorem}{section}
\numberwithin{equation}{section}

\begin{document}
	\title[Hecke-type action on Higher order Herglotz-Zagier function]{Hecke-type action on Higher order Herglotz-Zagier function}
	\author{Soumyarup Banerjee and Riya Mandal}\thanks{2020 \textit{Mathematics Subject Classification.} Primary 11F25, 39B32; Secondary 33E20. \\
		\textit{Keywords and phrases.} Herglotz-Zagier function, Functional equations, Period functions, Hecke-type actions}
	\address{Department of Mathematics, Indian Institute of Technology Kharagpur, Kharagpur, Midnapore - 721302, West Bengal, India.}
	\email{soumyarup@maths.iitkgp.ac.in}
	
	\address{Department of Mathematics, Indian Institute of Technology Kharagpur, Kharagpur, Midnapore - 721302, West Bengal, India.}
	\email{riyamandaljkc@gmail.com}
	
\begin{abstract}
In a seminal paper, Lewis and Zagier constructed variety of functions satisfying the three-term functional equations. In this article, we consider the first example among them and establish that the function is a Hecke eigen form with respect to the Hecke operators, which acts on periods. We then utilize this result to determine the action of the aforementioned operators on the derivative of the Higher order Herglotz-Zagier function. The action leads to a family of multi-term functional equations satisfied by the function. 
\end{abstract}
	\maketitle
	\vspace{-0.8cm}

\section{Introduction}\label{Intro}
The Kronecker limit formula occupies a prominent position among the celebrated results in number theory due to its wide ranging applications in diverse areas of mathematics, including geometry,  theoretical physics. For any number field $\K$, if $A$ denotes an ideal class from the ideal class group of $\K$, then the Dedekind zeta function $\zeta_\K(s)$ can be decomposed as $\zeta_\K(s)= \sum_A \zeta(s, A)$, where for $\Re(s)>1$, $\zeta(s, A) = \sum_{\mathrm{a}\in A} \frac{1}{\mathcal{N}(\mathrm{a})^s}$, with $\mathcal{N}(\mathrm{a})$ being the norm of the ideal $\mathrm{a}$. The constant term in the Laurent series of $\zeta(s, A)$ at the pole $s=1$ can be described by the Kronecker limit formula, which was named after Kronecker \cite{Kronecker} for his contribution in the case of an imaginary quadratic field.	
	
A seminal contribution \cite{Zagier} of Don Zagier concerning the Kronecker limit formula for real quadratic fields has fascinated number theorists for decades. For $\psi(s) = \frac{\Gamma'(s)}{\Gamma(s)}$ denoting the logarithmic derivative of the Euler gamma function, the  following infinite series
	\begin{align}
		F(x):= \sum_{n=1}^{\infty} \frac{\psi(nx) - \log(nx)}{n} \quad \qquad (x \in \C\setminus (-\infty, 0]), \nonumber
	\end{align}
plays a significant role in the Kronecker limit formula associated to real quadratic fields. The function similar to $F(x)$ had already been appeared in an earlier work of Herglotz \cite{Herglotz} and hence Radchenko and Zagier termed this function as Herglotz function (sometimes known as the Herglotz-Zagier function) in their article \cite{Radchenko}, where they have demonstrated important arithmetic properties of $F(x)$, for instance, its relation with the Dedekind eta-function, multi-term functional equations satisfied by $F(x)$ and its cohomological aspects etc. Prior to this, Zagier \cite[Equations (7.4), (7.8)]{Zagier} obtained the following two-term and three-term functional equations that the Herglotz function satisfies, namely, for $x \in \C\setminus (-\infty, 0]$
	\begin{align}
		&F(x)+F\left(\frac{1}{x}\right) = 2 F(1)+ \frac{\log^2 x}{2} - \frac{\pi^2 (x-1)^2}{6x},\nonumber\\
		&F(x)-F(x+1)-F\left(\frac{x}{x+1}\right)=-F(1)+\operatorname{Li}_2\left(\frac{1}{1+x}\right), \label{three term Herglotz}
	\end{align}
where 
$$\operatorname{Li}_2(t):= \sum_{n=1}^\infty \frac{t^n}{n^2} \qquad \quad (0<t<1),$$
 is the Euler's Dilogarithm function. The above functional equations possess significant role in the proof of Meyer’s theorem \cite{Zagier}, and in the asymptotic expansions \cite{Radchenko}  of $F(x)$  near $0$ and $1$. Functional equations analogous to \eqref{three term Herglotz} appear in diverse contexts such as period functions for Maass forms \cite[Equation  (0.1)]{Lewis}, cotangent functions, and double zeta functions etc. We refer the elegant survey article of Don Zagier \cite{ServeyZagier} for further studies on this topic. 
 
The general Laurent coefficient of $\zeta(s, A)$ at the pole $s=1$ was determined by Ishibashi \cite{Ish03}. In this connection, the function that naturally emerges is the $k$-th order generalization of the Herglotz-Zagier function, namely
		\begin{align}
		\Phi_k(x)=\sum_{n=1}^{\infty} \frac{k\psi_{k-1}(nx)-\log^k{(nx)}}{n} \quad\qquad (x \in \mathbb{C}\setminus (-\infty,0]),\nonumber
	\end{align} 
	where $\psi_k(x) = \frac{\Gamma_k'(x)}{\Gamma_k(x)} $ is the logarithmic derivative of the generalized gamma function $\Gamma_k(x)$, given by
	\begin{align*}
		\Gamma_k(x):= \lim_{n \to \infty} \frac{\exp{(\frac{\log^{k+1}(n)}{k+1}x)}\prod_{j=1}^{n}\exp{(\frac{\log^{k+1}(j)}{k+1})}}{\prod_{j=0}^{n}\exp{(\frac{\log^{k+1}(j+z)}{k+1})}},
	\end{align*}
	which was introduced by Dilcher in \cite{Dilcher}. The function $\psi_k(x)$ has significant importance in number theory as it is directly connected to the generalized Stieltjes constant $\gamma_k(x)$ via the relation $\psi_k(x) = - \gamma_k(x)$.
	
Recently, Dixit et al. established the following two-term functional equation satisfied by the higher order Herglotz-Zagier function $\Phi_k(x)$, valid for every non-negative integer $k$ and $x>0$,  namely \cite[p.~26, Theorem 3.4]{DixitMTZ} 
	  \begin{align}
	  	&\sum_{j=0}^{k}\frac{(-1)^{j+1}}{j+1} \binom{k}{j}\frac{\log^{\,k-j}(x)}{2^{\,k-j}}
	  	\Bigg(\Phi_{j+1}(x) + (-1)^{k-j}\Phi_{j+1}\!\left(\frac{1}{x}\right)
	  	\Bigg)\nonumber\\
	  	&\quad=\sum_{j=0}^{k} \binom{k}{j} \frac{\log^{\,k-j}(x)}{2^{\,k-j}} \Bigg[j L^*_{j-1}(x) - \frac{1}{2} a_{j+1,0} \log^2(x) + \left(x + \frac{(-1)^{k-j}}{x}-2\right) \z^{(j)}(2) \nonumber\\
	  	&\quad- j \sum_{\ell=1}^{j} \frac{a_{j,\ell-1}}{\ell}\bigg(\sum_{n=0}^{\ell-1} (-1)^{n+1} c_{\ell-n}\frac{(\ell+1)!}{(n+1)!} \log^{\,n+1}(x)+ \frac{(-1)^{\ell}}{\ell+2} \log^{\,\ell+2}(x)
	  	\bigg)+ \frac{2(-1)^{j+1}}{j+1} \Phi_{j+1}(1)\Bigg],\nonumber
	  \end{align}
 where
	  	 \begin{align}
	  	 c_k &:=\frac{1}{k!} \int_0^{\infty} \Biggl[\log^k(t) - \log^k\!\Bigl(\frac{t}{1-e^{-t}}\Bigr)\Biggr]
	  	\left(\frac{1}{e^t - 1} - \frac{1}{t}\right) dt, \nonumber\\
	  	L^*_{k}(x) &:=
	  	\begin{cases}
	  		\displaystyle \lim_{z \to 1} \frac{d^k}{dz^k} I(z,x) & \text{for~}k \in \mathbb{N} \cup \{0\} \\
	  		0 & \text{for~}k=-1,\nonumber
	  	\end{cases}		
	  \end{align}
with the integral
	  \begin{align}
	  	&I(z,x) :=\frac{1}{\Gamma(z)}\int_0^{\infty} t^{z-2} \log\!\left(\frac{1 - e^{-xt}}{1 - e^{-t}}\right)\log\!\left(\frac{1 - e^{-t}}{t}\right)\, dt. \nonumber
	  \end{align}
Here $a_{k,j}$ is the constant defined recursively by the relation
	  \begin{align*}
	  	a_{k,j}=-\sum_{r=0}^{k-2}\binom{k-1}{r}\Gamma^{k-r-1}(1)~a_{r+1,j} \quad \qquad (0\le j\le k-1),
	  \end{align*}
starting from $a_{1,0}=1$, and $a_{k,k-1}=1$, where $\Gamma^k(1)$ denotes the value of the $k$-th derivative of $\Gamma(s)$ with respect to $s$ at $s=1$. In the same article, the authors have also shown that the function $\Phi'_{k}(x)$ satisfies the following three-term functional equation for every non-negative integer $k$ and $x>1$, which is given by \cite[p. 29, Theorem 3.8]{DixitMTZ}
	 \begin{multline}
	 		\sum_{j=0}^{k}\binom{k}{j}\frac{(-1)^{j+1}}{j+1}\left(\frac{\log x}{2}\right)^{k-j}\Bigg(
	 	\Phi'_{j+1}(x)- \Phi'_{j+1}(x-1)+ \frac{1}{x^{2}}(-1)^{k-j}\Phi'_{j+1}\!\left(\frac{x-1}{x}\right)
	 	\Bigg)\nonumber\\
	 	=\sum_{j=0}^{k}\binom{k}{j}\left(\frac{\log x}{2}\right)^{k-j}	\sum_{\ell=0}^{j}\binom{j}{\ell}
	 	(-1)^{j}\gamma_{j-\ell}\Bigg(\frac{\log^{\ell} x}{x}-\frac{\log^{\ell}(x-1)}{x-1}+\frac{(-1)^{k-j}\log^{\ell}\!\left(\frac{x-1}{x}\right)}{x(x-1)}
	 	\Bigg)\nonumber\\
	 	+\frac{1}{(k+1)x}\left[\left(\frac{-\log x}{2}\right)^{k+1}-\left(\frac{\log x}{2} -\log(x-1)\right)^{k+1}\right],
	 \end{multline} 
where $\gamma_j$ denotes the $j$-th Stieltjes constant. 

In the present article, we determine a family of multi-term functional equations satisfied by $\Phi'_{k}(x)$. To this end, we investigate in different aspects, the action of the operators, which acts like the Hecke operators on the space of the period functions. 

We next briefly recall the definition of the period functions and describe the action of the Hecke operators on it. Let $\Gamma$ be the full modular group $SL_2(\mathbb{Z})$, which is generated by the matrices $T=\tiny{\begin{pmatrix}
		1 & 1\\
		0 & 1
	\end{pmatrix}}$ and $S=\tiny{\begin{pmatrix}
		0 & -1\\
		1 & 0
	\end{pmatrix}}$. We recall the action of the standard slash operator of weight $m$, which can be defined as 
\begin{align*}
(f|_{m}\gamma)(z):= \frac{(ad-bc)^{m/2}}{(cz+d)^m}f\left(\frac{az+b}{cz+d}\right),
\end{align*}
with $\gamma$ being the matrix $\gamma = \tiny{\begin{pmatrix}
		a & b\\
		c & d
	\end{pmatrix}}.$  

A rational period function of weight $2m$ over the full modular group is a rational function $q(z)$ that arises in the definition of a modular integral of weight $2m$ over $\Gamma$. A modular integral of weight $2m$ over $\Gamma$ is a meromorphic function $F$ defined on the upper half plane $\mathcal{H}$,  satisfying
	\begin{align}\label{modular_integral_relation}
		F|_{2m}T(z)=F(z) \qquad \text{and} \qquad F|_{2m}S(z)=F(z)+q(z).
	\end{align}
It follows from the transformations in \eqref{modular_integral_relation} along with the relation $S^2 = (TS)^3= I$ that the rational period function $q(z)$ satisfies 
	\begin{align}\label{rational period function relation}
		q|_{2m}S+q=0, \qquad q|_{2m}(TS)^2+q|_{2m}(TS)+q=0.
	\end{align}
The definition of the modular integral implies that every modular integral of weight $2m$ over $\Gamma$ has an associated rational period function of same weight over $\Gamma$. Knopp \cite{Knopp3} proved that the converse also holds. The rational functions satisfying the transformation laws \eqref{rational period function relation} are precisely the rational period function of weight $2m$ over $\Gamma$.

We next present the action of the Hecke operator on period functions and for that we fix few notations. For any positive integer $n$, let $\mathcal{M}_n$ be the set of $2\times 2$ integer matrices of determinant $n$, modulo $\{\pm 1\}$ and $\mathcal{R}_n=\mathbb{Q}[\mathcal{M}_n]$. Let $T_n^\infty$ be the classical Hecke operator, given by
		\begin{align*}
		T_n^\infty := \sum_{ad=n}\sum_{0\le b<d} \begin{bmatrix}
			a & b\\
			0 & d
		\end{bmatrix} \in \mathcal{R}_n,
	\end{align*}
which acts on the space of modular forms of weight $2m$ over $\Gamma$. Knopp \cite[p. 53]{Knopp1} introduced a Hecke operator $\widetilde{T}_n$ on rational period functions exploiting the action of the classical Hecke operator $T_n^\infty$ on modular integrals. Later, Choie and Zagier \cite[p. 12]{Choiezag} provides an algebraic definition of $\widetilde{T}_n$, independent of the existance of modular integrals for an arbitrary rational period function. We call an element $\widetilde{T}_n\in \mathcal{R}_n $ ``acts like the $n$-th Hecke operator on periods", if it satisfies the relation 
	\begin{align}\label{T_relation}
		(1-S)\widetilde{T}_n=T_n^\infty (1-S)+(1-T)Y
	\end{align}
for some $Y \in \mathcal{R}_n$. To describe the action of $\widetilde{T}_n$, we consider the graded ring $\mathcal{R}=\mathbb{Q}[\mathcal{M}]=\oplus_{n\in \mathbb{N}}\mathcal{R}_n$, where $\mathcal{M}=\cup_{n\in \mathbb{N}}\mathcal{M}_n$. This ring acts on the right of the vector space of meromorphic functions on $\mathbb{C}^2$ by the formula 
	\begin{align}
		\left(f \circ \sum_{i}\lambda_i\begin{bmatrix}
			a_i & b_i \\
			c_i & d_i
		\end{bmatrix}\right) (x,y)= \sum_{i} \lambda_i f(a_ix+b_iy, c_ix+d_iy).\nonumber
	\end{align} 

We next consider the function
\begin{align*}
		C(x):= \begin{cases}
			\cot(\pi x), & x \in \mathbb{C}\setminus \mathbb{Z}\\
			0, & x \in \mathbb{Z},
		\end{cases}
	\end{align*}
which is $1$-periodic, vanishes on $\Z$ and is holomorphic away from $\Z$. For the complex-valued function
 \begin{align*}
 \mathscr{C}(x,y)= C(x)C(y)+1,
 \end{align*} 
 Radchenko and Zagier \cite[p. 234, Theorem 1]{Radchenko} investigated the action of $\widetilde{T}_n$ on it to determine the multi-term functional equation of the Herglotz-Zagier function. We here introduce the complex-valued function
	\begin{align}
		\mathscr{C}^*(x,y)= C(x)-C(y),\nonumber
	\end{align}
which satisfies the three-term equation
\begin{align}
		\mathscr{C}^*(x,y)-\mathscr{C}^*(x,x+y)-\mathscr{C}^*(x+y,y)=0.\nonumber
	\end{align}
In the following result, we explore the action of $\widetilde{T}_n$ on the function $\mathscr{C}^*$.
	\begin{theorem}\label{Zagier_type_C_relation}
		Suppose that $\widetilde{T}_n\in \mathcal{R}_n$ acts like the $n$-th Hecke operators on periods. Then 
		\begin{align}
			(\mathscr{C}^*\circ \widetilde{T}_n))(x,y)-\sum_{\ell\mid n}\ell \mathscr{C}^*(\ell x, \ell y)=c(\widetilde{T}_n),\nonumber
		\end{align}
		where $c:\mathcal{R}\to \mathbb{Z}$ is the group homomorphism defined on generators by 
		\begin{align*}
			\begin{bmatrix}
				a & b \\
				c & d
			\end{bmatrix} \to \begin{cases}
				-i[\operatorname{sgn}(a+b)-\operatorname{sgn}(c+d)], & a+b\neq 0, c+d \neq 0\\
				-i[\operatorname{sgn}(a+b)-\operatorname{sgn}(d)], & a+b\neq 0, c+d= 0\\
				-i[\operatorname{sgn}(b)-\operatorname{sgn}(c+d)], & a+b= 0, c+d \neq 0.
			\end{cases}
		\end{align*}
	\end{theorem}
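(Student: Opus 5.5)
The plan is to follow the strategy of Radchenko and Zagier for $\mathscr{C}$ in \cite{Radchenko}. Set
\[
G_n(x,y):=(\mathscr{C}^*\circ \widetilde{T}_n)(x,y)-\sum_{\ell\mid n}\ell\, \mathscr{C}^*(\ell x,\ell y).
\]
I would show that $G_n$ is a constant function of $(x,y)$, and then evaluate that constant by an asymptotic limit. Throughout I fix a representative $\pm\left[\begin{smallmatrix} a&b\\ c&d\end{smallmatrix}\right]$ for each element of $\mathcal{M}_n$. Both $\mathscr{C}^*$ and the proposed homomorphism $c$ change sign under $\gamma\mapsto-\gamma$, so the identity is consistent once these choices are fixed.

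\textbf{Step 1 (structure of $\mathscr{C}^*$).} The map $\mathscr{C}^*$ is additive in the two one-variable pieces: $\mathscr{C}^*(x,y)=C(x)-C(y)$. Hence for every $\gamma=\left[\begin{smallmatrix} a&b\\ c&d\end{smallmatrix}\right]$,
\[
\mathscr{C}^*\circ\gamma = C(ax+by)-C(cx+dy).
\]
This is an alternating sum of functions of a single linear form. The three-term equation says that $\mathscr{C}^*$ is annihilated by $1-T-T'$, where $T'=\left[\begin{smallmatrix}1&0\\1&1\end{smallmatrix}\right]$. Oddness and periodicity of $C$ give the further relations
\[
\mathscr{C}^*\circ S(x,y)=-C(x)-C(y),\qquad \mathscr{C}^*\circ (1-T)=C(x)-C(x+y).
\]
I would use these to rewrite the defining relation \eqref{T_relation}, $(1-S)\widetilde T_n=T_n^\infty(1-S)+(1-T)Y$, as an identity of functions. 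Applied to $\mathscr{C}^*$, it expresses $\mathscr{C}^*\circ\widetilde T_n$ through $\mathscr{C}^*\circ T_n^\infty$ plus terms of the form $F\circ(1-T)Y$.

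For $T_n^\infty$ the computation is explicit. The distribution relation
\[
\sum_{b\bmod d}C\!\left(\frac{x+b}{d}\right)=d\,C(x)
\]
(after rescaling), together with $1$-periodicity, yields $\sum_{\ell\mid n}\ell\,\mathscr{C}^*(\ell x,\ell y)$ up to a constant. This is where the subtracted sum in the theorem comes from.

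\textbf{Step 2 ($G_n$ is constant).} This is the step I expect to be the main obstacle. The singularities of $\mathscr{C}^*\circ\gamma$ lie on the real hyperplanes $ax+by\in\Z$ and $cx+dy\in\Z$. Using Step 1 and the relation \eqref{T_relation}, I would show three things:
\begin{enumerate}
\item $G_n$ is annihilated by $1-T$, i.e.\ it is invariant under $x\mapsto x+y$ in the appropriate sense.
\item $G_n$ is invariant under the $S$-action up to sign.
\item $G_n$ is invariant under integer translations in $x$ and in $y$, by periodicity of $C$.
\end{enumerate}
Next, the simple poles along each line $\{ax+by=m\}$ must cancel. The residues of $C$ are all $1/\pi$, so this is a counting argument on how many matrices in $\widetilde T_n$ and in $T_n^\infty$ share a given linear form. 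That count is exactly what \eqref{T_relation} controls. Once $G_n$ is entire and doubly periodic, I would show it is bounded: away from the real hyperplanes each $C(\cdot)$ is bounded, because $C(z)\to\mp i$ as $\Im z\to\pm\infty$. Liouville's theorem in each variable then gives that $G_n$ is locally constant. On the region where all the linear forms are non-real, $G_n$ is therefore a genuine constant.

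\textbf{Step 3 (evaluating the constant).} Take $x=y=it$ with $t\to+\infty$.
\begin{itemize}
\item If $a+b\neq0$ and $c+d\neq0$, then $C((a+b)it)\to -i\operatorname{sgn}(a+b)$ and $C((c+d)it)\to -i\operatorname{sgn}(c+d)$. These limits give exactly the first case of the homomorphism $c$.
\item The subtracted sum satisfies $\mathscr{C}^*(\ell it,\ell it)=0$ identically, so it contributes nothing.
\item If $a+b=0$ or $c+d=0$, I would perturb to $x=it$, $y=it(1+\varepsilon)$ with small $\varepsilon>0$. The degenerate linear form then becomes $b\varepsilon it$ (respectively $d\varepsilon it$), whose limit gives $-i\operatorname{sgn}(b)$ (respectively $-i\operatorname{sgn}(d)$). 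In the nondegenerate cases $\varepsilon$ does not change the sign.
\end{itemize}
Since $G_n$ is constant, both evaluations give the same value, and summing over the terms of $\widetilde T_n$ yields $G_n=c(\widetilde T_n)$. Finally, $c$ is extended linearly, which makes it a group homomorphism $\mathcal{R}\to\Z$ after the conventional normalization of the factor $-i$.
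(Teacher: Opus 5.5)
Your architecture matches the paper's: show $G_n$ has no poles, get constancy from periodicity, boundedness and Liouville, then evaluate at $(it,i(1+\varepsilon)t)$. That is Proposition \ref{Zagier_type_proposition} plus the limit computation, and your Step~3 agrees with the paper's evaluation of the constant. The gap is the one step that carries the content: \emph{why} the poles of $G_n$ cancel. The tools you offer for it do not work.

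\begin{itemize}
\item Applying \eqref{T_relation} to $\mathscr{C}^*$ does not produce $\mathscr{C}^*\circ\widetilde{T}_n$. Since $\mathscr{C}^*\circ(1-S)(x,y)=2C(x)$, the left side becomes $2\sum_\gamma v_\gamma C(ax+by)$, which has forgotten the bottom rows. The remaining term $\mathscr{C}^*\circ(1-T)Y=\bigl(C(x)-C(x+y)\bigr)\circ Y$ does not vanish in general.
\item The distribution relation does not apply, because in $\mathscr{C}^*\circ T_n^\infty$ the shift is $by$, not $b/d$. Already for $n=2$, $\mathscr{C}^*\circ T_2^\infty-\sum_{\ell\mid 2}\ell\,\mathscr{C}^*(\ell x,\ell y)=C(x+y)-C(2x)$, which is not constant. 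So this is not where the subtracted sum comes from.
\item The invariances (1)--(2) of $G_n$ under $1-T$ and $S$ have no a priori justification; they follow only once constancy is known.
\end{itemize}
What remains is the bare assertion that the residue count is ``exactly what \eqref{T_relation} controls'', with no mechanism given.

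The missing idea is the paper's homomorphism $\Phi$. For $\xi=\sum v_\gamma\gamma$, the polar part of $\mathscr{C}^*\circ\xi$ depends only on the signed row data $\sum_\gamma v_\gamma\bigl([\text{top row of }\gamma]-[\text{bottom row of }\gamma]\bigr)$. This is minus the bottom-row data of $(1-S)\xi$, because $S\left[\begin{smallmatrix}a&b\\c&d\end{smallmatrix}\right]=\left[\begin{smallmatrix}-c&-d\\a&b\end{smallmatrix}\right]$.

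Left multiplication by $T$ does not change bottom rows, so $(1-T)Y$ is invisible. Relation \eqref{T_relation} then says the bottom rows of $(1-S)\widetilde{T}_n$ are those of $T_n^\infty(1-S)$: $(0,d)$ minus $(d,0)$, each with multiplicity $d$ (one for each $0\le b<d$). That is exactly the row data of $\sum_{\ell\mid n}\ell\,\mathscr{C}^*(\ell x,\ell y)$, which is where the subtracted sum comes from.

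Pushed one step further, this makes the analysis unnecessary, and gives the version of your Step~1 idea that actually works. Set $h(x,y):=C(y)$. Then $h\circ T=h$ and $h\circ(1-S)=-\mathscr{C}^*$, so applying \eqref{T_relation} to $h$ rather than to $\mathscr{C}^*$ gives
\begin{equation*}
\mathscr{C}^*\circ\widetilde{T}_n=-(h\circ T_n^\infty)\circ(1-S)=\sum_{d\mid n}d\,\bigl(C(dx)-C(dy)\bigr)
\end{equation*}
identically on $\mathbb{C}^2$. In particular the constant is $0$, i.e.\ $c(\widetilde{T}_n)=0$, for any signed representatives for which \eqref{T_relation} holds as written. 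Your argument needs the same proviso, as you rightly noted that $\mathscr{C}^*$ is odd.
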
	
In a seminal paper, Lewis and Zagier \cite{Lewis} introduced period functions associated to Maass cusp forms. For $s\in \C$ with $\Re(s)>0$ and $x \in \mathbb{C}\setminus (-\infty,0]$, a period function with spectral parameter $s$ is a holomorphic function $\psi(x)$ that   satisfies the three-term equation
\begin{align}\label{LewisZagier three term}
	\psi(x)-\psi(x+1)-\frac{1}{(x+1)^{2s}}\psi\left(\frac{x}{x+1}\right)=0,
\end{align}
together with suitable growth conditions. A period-like function with spectral parameter $s$ is a holomorphic function, that satisfies three-term equation as in \eqref{LewisZagier three term}, with or without the growth condition. The same article  presents a variety of examples of period-like functions, among which we are particularly interested in the first two.  In the second example, the authors considered the function
\begin{align}\label{lweis_zagier_second_example}
	\psi^+_s(x)={\sum_{m\ge 0}\sum_{n \ge 0}}{}^{{}^{\hspace{.1cm}*}} \  \frac{1}{(mx+n)^{2s}} \qquad \Big(\Re(s)>1, \quad  x \in \mathbb{C}\setminus (-\infty,0]\Big),
\end{align}
where $*$ means $(m,n)\neq (0,0)$ and terms with either $m$ or $n$ equal to $0$ are to be counted with multiplicity $\frac{1}{2}$.  Recently, Choie and Kumar \cite{ChoieKumar} proved that the function $\psi^+_s$ is a Hecke eigenform under the action of the operator $\widetilde{T}_n$. For $\widetilde{T}_n=\sum_{\gamma}v_{\gamma}\gamma$ with $\gamma$ being the matrix  $\gamma=\tiny{\begin{bmatrix}
		a & b \\
		c & d
	\end{bmatrix}}$ having non-negative entries, the result of Choie and Kumar precisely states that \cite[Theorem 6.2]{ChoieKumar}
\begin{align}\label{Choie_Kumar_Theorem}
		\big(\psi^+_s|_{2s}\widetilde{T}_n\big)(x)=n^{s}\sigma_{1-2s}(n)	\psi^+_s(x),
	\end{align}
	where $\sigma_s(n):=\sum_{\ell \mid n}\ell^s$ is the generalized divisor function.
	
In the first example of period-like functions, Lewis-Zagier \cite[p 228, Example 1]{Lewis} considered the function
\begin{align}\label{1st_eg}
\psi^-_s(x)= 1-x^{-2s} \qquad \quad \Big(x \in \mathbb{C}\setminus [0,\infty)\Big).
\end{align} 	
Our next result shows that the function $\psi^-_s(x)$ is a Hecke eigenform under the action of the operator $\widetilde{T}_n$. 
\begin{theorem}\label{eigenform_1st_example_lewis_zagier}
Let $\widetilde{T}_n$ acts like the $n$-th Hecke operator on periods and takes the form $\widetilde{T}_n=\sum_{\gamma}v_{\gamma}\gamma$, where 
$\gamma=\tiny{\begin{bmatrix}
		a & b \\
		c & d
	\end{bmatrix}}$ are the matrices with non-negative entries. Then for $x>0$, the function $\psi^-_s(x)$ satisfies
	\begin{align*}
		\big(\psi^-_s|_{2s}\widetilde{T}_n\big)(x)=n^{s}\sigma_{1-2s}(n)\psi^-_s(x).
	\end{align*}
\end{theorem}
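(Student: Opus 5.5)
The plan is to reduce the statement to a homogeneous two-variable identity and then verify that identity for one explicit choice of $\widetilde{T}_n$.

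\textbf{Reduction to two variables.} For a matrix $\gamma=\tiny{\begin{bmatrix} a & b\\ c & d\end{bmatrix}}$ with non-negative entries and $x>0$, there are no branch issues, so
\begin{align*}
\big(\psi^-_s|_{2s}\gamma\big)(x)=\frac{n^{s}}{(cx+d)^{2s}}\left(1-\Big(\frac{ax+b}{cx+d}\Big)^{-2s}\right)=n^{s}\big[(cx+d)^{-2s}-(ax+b)^{-2s}\big].
\end{align*}
Put $f(x,y)=y^{-2s}-x^{-2s}$. Then $\psi^-_s|_{2s}\gamma = n^s (f\circ\gamma)(x,1)$. Since $f(x,1)=\psi^-_s(x)$, Theorem \ref{eigenform_1st_example_lewis_zagier} is equivalent to
\begin{align*}
\sum_\gamma v_\gamma\big[(cx+d)^{-2s}-(ax+b)^{-2s}\big]=\sigma_{1-2s}(n)\big(1-x^{-2s}\big),\qquad x>0.
\end{align*}
This has exactly the shape of the Choie--Kumar identity \eqref{Choie_Kumar_Theorem}, with $\psi^+_s$ replaced by its ``odd'' companion. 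So I would follow their strategy.

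\textbf{Independence of the choice of $\widetilde{T}_n$.} First I would show that the left side does not depend on which admissible $\widetilde{T}_n$ with non-negative entries is chosen. Suppose two such elements both satisfy \eqref{T_relation}. Their difference $D$ then satisfies $(1-S)D=(1-T)Y'$. I would use the fact that $\psi^-_s$ satisfies the three-term equation \eqref{LewisZagier three term}, i.e.\ $\psi^-_s|(1-T-T')=0$ with $T'=\tiny{\begin{bmatrix}1&0\\1&1\end{bmatrix}}$. Combined with the Choie--Zagier structure theorem (the difference of two such elements lies in the right ideal generated by $1-T-T'$ and $1+S$, restricted to non-negative matrices), this should show that $D$ annihilates $\psi^-_s$ on $x>0$.

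I would then fix the explicit Choie--Zagier representative
\begin{align*}
\widetilde{T}_n=\sum_{\substack{ad-bc=n\\ a>c\ge 0,\ d>b\ge 0}}\begin{bmatrix} a&b\\ c&d\end{bmatrix}.
\end{align*}
This representative is known to satisfy \eqref{T_relation}, so it is enough to prove the identity for it.

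\textbf{Splitting the sum.} For this representative I would split the matrices into three classes.
\begin{itemize}
\item[(i)] Matrices with $c=0$. These are $a d=n$ with $0\le b<d$. Their bottom rows contribute $\sum_{d\mid n} d\cdot d^{-2s}=\sigma_{1-2s}(n)$.
\item[(ii)] Matrices with $b=0$. These are $ad=n$ with $0\le c<a$. Their top rows contribute $-\sum_{a\mid n} a\,(ax)^{-2s}=-\sigma_{1-2s}(n)x^{-2s}$.
\item[(iii)] The remaining contributions. These are the top rows $(a,b)$ of matrices with $c=0$, the bottom rows $(c,d)$ of matrices with $b=0$, and both rows of matrices with $b,c\ge1$.
\end{itemize}
The claim then reduces to showing that the multiset of bottom rows $(c,d)$ occurring in class (iii) coincides with the multiset of top rows $(a,b)$ occurring there, so that these terms cancel exactly.

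\textbf{Main obstacle.} The combinatorial bijection in class (iii) is the step I expect to be hardest. I would try the Farey/continued-fraction bijection used by Choie--Kumar (and Radchenko--Zagier): a bottom row $(c,d)$ with $\gcd$ data compatible with determinant $n$ is completed uniquely to a matrix of the set in which it appears as the top row. Concretely, I would take $(a',b')=(c,d)$ and solve $a'd'-b'c'=n$ with $a'>c'\ge0$ and $d'>b'$. The point to check is that this completion exists and is unique exactly when the original row is not a class (i) top row or a class (ii) bottom row.

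As a fallback, I could derive the identity from \eqref{Choie_Kumar_Theorem} directly. Writing $\psi^+_s$ in terms of its boundary terms, the difference between the $m=0$ and $n=0$ boundary contributions is, up to a factor $\tfrac12\zeta(2s)$, the function $\psi^-_s$ with a sign change. The task would then be to show that the $\widetilde{T}_n$-action respects this decomposition.
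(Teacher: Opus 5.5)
The genuine gap is the step ``Independence of the choice of $\widetilde{T}_n$''. The theorem is asserted for every admissible $\widetilde{T}_n$ with non-negative entries, but your computation only treats the single representative $\widehat{T}_n$. Everything therefore rests on showing $\psi^-_s|_{2s}D=0$ on $x>0$ whenever $D$ has non-negative entries and $(1-S)D\in(1-T)\mathcal{R}_n$. The structure statement you invoke is not precise: ``restricted to non-negative matrices'' has no meaning for the generator $1+S$. Even a precise version would not give the conclusion, because $\psi^-_s$ is not annihilated by $1+S$. On $\mathcal{H}$ with principal branches, $(\psi^-_s|_{2s}S)(z)=z^{-2s}-e^{-2\pi i s}$, so $\psi^-_s|_{2s}(1+S)=1-e^{-2\pi i s}\neq 0$ for $s\notin\Z$; on $(0,\infty)$ the matrix $S$ leaves the domain altogether. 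The fallback via \eqref{Choie_Kumar_Theorem} merely restates the problem.

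The combinatorial step is also not right as written. Class (iii) double counts the diagonal matrices; you need top rows with $b\ge1$ and bottom rows with $c\ge1$. The ``unique completion'' also fails, since multiplicities up to $\gcd$ occur. For $n=6$ the row $(2,2)$ is the top row of $\left[\begin{smallmatrix}2&2\\0&3\end{smallmatrix}\right]$ and $\left[\begin{smallmatrix}2&2\\1&4\end{smallmatrix}\right]$, and the bottom row of $\left[\begin{smallmatrix}3&0\\2&2\end{smallmatrix}\right]$ and $\left[\begin{smallmatrix}4&1\\2&2\end{smallmatrix}\right]$.

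What you are missing is that \eqref{T_relation} already contains the row matching for every admissible $\widetilde{T}_n$. So neither a specific representative nor an independence result is needed; the independence claim is true, but only as a consequence. Let $\beta$ send a matrix in $\mathcal{M}_n$ to the class of its bottom row in the $\mathbb{Q}$-vector space on $(\Z^2\setminus\{0\})/\{\pm1\}$, extended linearly. Three facts follow directly: $\beta((1-T)Y)=0$, since $T\gamma$ and $\gamma$ have the same bottom row; the bottom row of $S\gamma$ is the top row of $\gamma$; and $\left[\begin{smallmatrix}a&b\\0&d\end{smallmatrix}\right]S=\left[\begin{smallmatrix}b&-a\\d&0\end{smallmatrix}\right]$. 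Hence applying $\beta$ to \eqref{T_relation} gives
\begin{align*}
\sum_{\gamma}v_\gamma\big([(c,d)]-[(a,b)]\big)=\sum_{d\mid n}d\,\big([(0,d)]-[(d,0)]\big).
\end{align*}
This identity is supported on classes of non-zero non-negative vectors. The linear map $[(p,q)]\mapsto(px+q)^{-2s}$ turns it into exactly your reduced identity $\sum_\gamma v_\gamma[(cx+d)^{-2s}-(ax+b)^{-2s}]=\sigma_{1-2s}(n)(1-x^{-2s})$.

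The paper reaches the same conclusion analytically. It proves the Hecke relation for $\mathscr{C}^*$ (Theorem \ref{Zagier_type_C_relation}) and observes $c(\widetilde{T}_n)=0$ because all rows are non-negative and non-zero. It then takes the Mellin transform \eqref{C_integral_simplification}, which converts $C(i(ax+by)t)-C(i(cx+dy)t)$ into $(ax+by)^{-2s}-(cx+dy)^{-2s}$ up to a constant. Like the argument above, and unlike your reduction, it works uniformly in $\widetilde{T}_n$.
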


We next apply the operator $\widetilde{T}_n$ on the derivative of higher order Herglotz-Zagier function to determine the corresponding multi-term functional equations. We extend the definition of the standard slash operator of weight $m$ by
	\begin{align}\label{genslash}
		(f|^{\ell}_{m}\gamma)(x):= \frac{(ad-bc)^{m/2}}{(cx+d)^m} \log^{\ell}(cx+d) f\left(\frac{ax+b}{cx+d}\right).
	\end{align}
It reduces to the standard slash operator at $\ell=0$. Our next result shows the action of  $\widetilde{T}_n$ on the function $\Phi'_k$.
\begin{theorem}\label{Multi-term-functional-equation}
	Let $\widetilde{T}_n$ acts like the $n$-th Hecke operator on periods and takes the form $\widetilde{T}_n=\sum_{\gamma}v_{\gamma}\gamma$, where 
$\gamma=\tiny{\begin{bmatrix}
		a & b \\
		c & d
	\end{bmatrix}}$ are the matrices with non-negative entries. Then for $x>0$, the derivative of the higher order Herglotz-Zagier function
	satisfies
	\begin{multline}
		\sum_{r=1}^{k} \binom{k}{r} \frac{1}{n} \left[ \bigg(\Phi'_{r}|_2^{(k-r)}\widetilde{T}_n\bigg)(x) -n\sum_{\ell \mid n}\frac{\log^{k-r}(\ell)}{\ell}\Phi'_{r}(x)\right] \nonumber\\
		= \sum_{r=1}^{k}\binom{k}{r} \tilde{\gamma}_{k-r} \left[\sum_{\gamma} v_{\gamma}\frac{\log^{r}(ax+b)}{(ax+b)(cx+d)}-\sum_{\ell \mid n}\frac{\log^{r}(\ell x)}{\ell x}\right],\nonumber
	\end{multline}
where
	\begin{align*}
		\tilde{\g}_r = \begin{cases}
			-r\g_{r-1} & 1\le r \le k\\
			1 & r=0.
		\end{cases}
	\end{align*}
\end{theorem}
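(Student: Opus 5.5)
We obtain the statement by differentiating the Hecke eigen-relation for Lewis--Zagier's first example in the spectral parameter. We need an integral (Mellin-type) representation of $\Phi'_k(x)$ in which the period-like function $\psi^-_s$, or a close relative, appears with $s$ as the Mellin variable.

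\textbf{Step 1: Mellin representation of $\Phi'_k$.} Differentiating termwise,
\begin{align*}
\Phi_k'(x)=\sum_{n\ge1}\big(k\psi_{k-1}'(nx)-k\log^{k-1}(nx)/(nx)\big).
\end{align*}
Since $\psi_{k-1}=-\gamma_{k-1}(x)$ is tied to the Hurwitz zeta function, we have the Laurent expansion
\begin{align*}
\zeta(s,x)=\frac{1}{s-1}+\sum_j\frac{(-1)^j\gamma_j(x)}{j!}(s-1)^j ,
\end{align*}
and differentiating in $x$ gives $-s\zeta(s+1,x)$. Summing over $nx$ leads to the double Dirichlet-type series
\begin{align*}
\sum_{n\ge1}\sum_{m\ge0}(m+nx)^{-s-1}.
\end{align*}
After removing the $\log$-terms, this is essentially $x^{-1}$ times a Lewis--Zagier-type sum with $2s'=s+1$. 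The cleaner route is to write $\Phi'_k(x)$ as the coefficient of $(s-1)^{k-1}$ (up to $k!$ and signs) of a meromorphic function $G_s(x)$ at $s=1$. Here $G_s(x)$ is a linear combination of $\sum_{n}\big(\zeta(s+1,nx)-\dots\big)$ and $\zeta(s)x^{-s}\cdot$(elementary). The principal parts at $s=1$ generate exactly the $\gamma$-terms: expanding $\zeta(s)=\frac{1}{s-1}+\sum(-1)^j\gamma_j(s-1)^j/j!$ produces the constants $\tilde\gamma_{k-r}$. Meanwhile, $x^{-s}=x^{-1}\sum(-\log x)^r(s-1)^r/r!$ produces the $\log^r(\cdot)/(\cdot)$ terms on the right side.

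\textbf{Step 2: Hecke action on $G_s$.} We show that the relevant piece of $G_s$ is built from $\psi^-_{s'}$ (with $2s'=s+1$, weight $2s'$) plus a period-like function $\psi^+$-type that is a Hecke eigenform. We apply Theorem~\ref{eigenform_1st_example_lewis_zagier} and \eqref{Choie_Kumar_Theorem}, both with eigenvalue $n^{s'}\sigma_{1-2s'}(n)=n^{(s+1)/2}\sum_{\ell\mid n}\ell^{-s}$. The normalization $n^{(s+1)/2}$ against the $\det^{1}$ factor of weight $2$ produces the prefactor $1/n$ and the $\ell^{-s}$ in $\sum_{\ell\mid n}\ell^{-s}$. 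The non-eigen pieces $\zeta(s)(ax+b)^{-s}(cx+d)^{-1}$ give $\sum_\gamma v_\gamma (ax+b)^{-s}(cx+d)^{-1}$, compared with $\sum_{\ell\mid n}(\ell x)^{-s}\ell^{-1}\cdot$ from the eigenvalue side. This yields an identity in $s$ of the form
\begin{align*}
\big(G_s|_{2}(cx+d)^{1-s}\widetilde T_n\big)(x)-n\sum_{\ell\mid n}\ell^{-s}G_s(x)=\zeta(s)\Big[\sum_\gamma v_\gamma\frac{(ax+b)^{1-s}}{(ax+b)(cx+d)}-\sum_{\ell\mid n}\frac{(\ell x)^{1-s}}{\ell x}\Big].
\end{align*}
The factor $(cx+d)^{1-s}$ arises because the weight is $s+1$ rather than $2$.

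\textbf{Step 3: Taylor expansion at $s=1$.} We expand both sides in powers of $(s-1)$ and extract the coefficient of $(s-1)^{k}$. We use $(cx+d)^{1-s}=\sum_j(-1)^j\log^j(cx+d)(s-1)^j/j!$, which gives the generalized slash $|_2^{(k-r)}$ of \eqref{genslash}, and $\ell^{-s}=\ell^{-1}\sum_j(-\log\ell)^j(s-1)^j/j!$, which gives $\log^{k-r}(\ell)/\ell$. The Leibniz products produce the binomials $\binom{k}{r}$. The pole of $\zeta(s)$ cancels against the $r=0$ terms, which vanish on both sides because $\sum_\gamma v_\gamma\frac{1}{(ax+b)(cx+d)}=\sum_{\ell\mid n}\frac1{\ell x}$. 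This cancellation is itself the $s=1$ case of Theorem~\ref{eigenform_1st_example_lewis_zagier}, with the identity rearranged. The sign conventions $(-1)^j$ should then combine into $\tilde\gamma_{k-r}=-(k-r)\gamma_{k-r-1}$.

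\textbf{Main obstacle.} The hardest part is Step 1: finding the exact representation of $\Phi'_k$ as a Laurent coefficient of an expression whose $x$-dependence is precisely a combination of Hecke eigenforms $\psi^\pm$. This requires justifying analytic continuation and interchange of summation and differentiation for $x>0$. It also requires checking that the non-negativity of the entries of $\gamma$ keeps every argument in the region where Theorem~\ref{eigenform_1st_example_lewis_zagier} and \eqref{Choie_Kumar_Theorem} apply. If a direct identification fails, we would instead prove the $s$-identity of Step 2 by hand, decomposing $\widetilde T_n$ via \eqref{T_relation} as in the proof of Theorem~\ref{Zagier_type_C_relation}.
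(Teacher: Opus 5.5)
Your route is the paper's: take $G_s(x)=\Phi_x(s,x)$ and use Lemma~\ref{F-relation-derivative-Phi},
\begin{align*}
\Phi_x(s,x)=x^{-s}\zeta(s)+\tfrac12 s\zeta(s+1)\,\psi^-_{\frac{s+1}{2}}(x)-s\,\psi^+_{\frac{s+1}{2}}(x).
\end{align*}
Kill the $\psi^\pm$ pieces with \eqref{Choie_Kumar_Theorem} and Theorem~\ref{eigenform_1st_example_lewis_zagier}. Then read off Taylor coefficients at $s=1$ via $\lim_{s\to1}\partial_s^{k-1}\Phi(s,x)=\frac{(-1)^k}{k}\Phi_k(x)$. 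Your Laurent-expansion sketch with $\gamma_j(y)=-\psi_j(y)$ is exactly how that lemma of Dixit et al.\ is proved, so your ``main obstacle'' is not really one. Note there is no extra factor $x^{-1}$; such a factor would destroy the eigen-relation.

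The step that fails as written is your justification of
\begin{align*}
\sum_{\gamma}\frac{v_\gamma}{(ax+b)(cx+d)}=\sum_{\ell\mid n}\frac{1}{\ell x},
\end{align*}
which you need both to cancel the pole of $\zeta(s)$ and to drop the $r=0$ term on the right. It is \emph{not} the $s=1$ case of Theorem~\ref{eigenform_1st_example_lewis_zagier}. At spectral parameter $1$ that theorem gives $\sum_\gamma v_\gamma\big((cx+d)^{-2}-(ax+b)^{-2}\big)=\sigma_{-1}(n)(1-x^{-2})$, which is a different identity. The paper proves the needed identity separately (Lemma~\ref{Pole_Cancellation}, $(1/z)|_2\widetilde T_n=\sigma_1(n)/z$), using that $G_2$ is a weight-$2$ modular integral with period $-2\pi i/z$ and a $T_n^\infty$-eigenform with eigenvalue $\sigma_1(n)$. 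Within your framework there are two cheap repairs:
\begin{itemize}
\item[(i)] $\Phi(s,x)$ is holomorphic on $\Re(s)>0$, since each summand $\zeta(s,nx)-(nx)^{1-s}/(s-1)$ is entire and the series converges locally uniformly. So the left side of your $s$-identity, continued from $\Re(s)>1$, is holomorphic at $s=1$. The residue of the right side at $s=1$, which is exactly the difference above, must therefore vanish.
\item[(ii)] At spectral parameter $w$, Theorem~\ref{eigenform_1st_example_lewis_zagier} reads $\sum_\gamma v_\gamma\big((cx+d)^{-2w}-(ax+b)^{-2w}\big)=\sigma_{1-2w}(n)(1-x^{-2w})$, with both sides entire in $w$. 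Dividing by $2w$ and letting $w\to0$ gives $\sum_\gamma v_\gamma\log\frac{ax+b}{cx+d}=\sigma_1(n)\log x$. Differentiating in $x$, with $\frac{d}{dx}\log\frac{ax+b}{cx+d}=\frac{n}{(ax+b)(cx+d)}$, gives the identity.
\end{itemize}

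Two bookkeeping slips also need fixing. First, your displayed $s$-identity carries the weight-$2$ factor $\det\gamma=n$ on the left but not on the right. The correct form is
\begin{align*}
\sum_\gamma \frac{v_\gamma}{(cx+d)^{s+1}}\Phi_x\Big(s,\frac{ax+b}{cx+d}\Big)-\sum_{\ell\mid n}\ell^{-s}\Phi_x(s,x)=\zeta(s)\Big[\sum_\gamma\frac{v_\gamma}{(ax+b)^s(cx+d)}-\sum_{\ell\mid n}(\ell x)^{-s}\Big],
\end{align*}
and this is where the $1/n$ in the statement comes from. Second, the coefficient to extract is that of $(s-1)^{k-1}$, not $(s-1)^{k}$. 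In $\Phi_x(s,\cdot)$ the term $\Phi'_r$ sits at $(s-1)^{r-1}$, and in $(cx+d)^{1-s}$ the term $\log^{k-r}(cx+d)$ sits at $(s-1)^{k-r}$. That coefficient produces exactly $\binom{k}{r}$ on the left and the constants $\tilde\gamma_{k-r}$ on the right.
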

Radchenko and Zagier \cite[Proposition 3]{Lewis} have shown that the sum of the matrices $ \widehat{T}_n \in \mathcal{R}_n $, given by
\begin{align*}
	\widehat{T}_n:=\sum_{\substack{0\le c <a \\ 0\le b <d \\ ad-bc=n}}\begin{bmatrix}
		a &b\\
		c & d    \end{bmatrix},
\end{align*}
acts like the $n$-th Hecke operator on periods. As an example, one can consider
\begin{align*}
	\widehat{T}_2&= \begin{bmatrix}
		1 & 0\\
		0 & 2
	\end{bmatrix}+ \begin{bmatrix}
		1 & 1\\
		0 & 2
	\end{bmatrix}+ \begin{bmatrix}
		2 & 0\\
		0 & 1
	\end{bmatrix}+ \begin{bmatrix}
		2 & 0\\
		1 & 1
	\end{bmatrix},\\
	\widehat{T}_3&=\sum_{p=0}^{2}\begin{bmatrix}
		1 & p\\
		0 & 3
	\end{bmatrix} +\sum_{p=0}^{2}\begin{bmatrix}
		3 & 0\\
		p & 1
	\end{bmatrix} +\begin{bmatrix}
		2 & 1\\
		1 & 2
	\end{bmatrix}.
\end{align*}
The next corollary shows that an application of the special case $\widehat{T}_n$ of $\widetilde{T}_n$ in Theorem \ref{Multi-term-functional-equation} determines explicit multi-term functional equations of $\Phi'_k$.
	\begin{corollary}
For  $x>0$ and any positive integer $n$, we have 
	\begin{multline}
	\sum_{r=1}^{k} \binom{k}{r} \Bigg[\sum_{\substack{0\le c <a \\ 0\le b <d \\ ad-bc=n}} \frac{\log^{k-r}(cx+d)}{(cx+d)^2}\Phi'_{r}\left(\frac{ax+b}{cx+d}\right)-\sum_{\ell \mid n} \frac{\log^{k-r}(\ell)}{\ell}\Phi'_{r}(x)\Bigg]\nonumber\\
	= \sum_{r=1}^{k}\binom{k}{r} \tilde{\gamma}_{k-r} \Bigg[\sum_{\substack{0\le c <a \\ 0\le b <d \\ ad-bc=n}}\frac{\log^{r}(ax+b)}{(ax+b)(cx+d)}-\sum_{\ell \mid n}\frac{\log^{r}(\ell x)}{\ell x}\Bigg].
\end{multline}
In particular, for $n=2$,
\begin{multline*}
	\sum_{r=1}^{k}\binom{k}{r}\bigg[\frac{\log^{k-r}(2)}{2}\left(\frac{1}{2}\Phi_{r}'\left(\frac{x}{2}\right)+\frac{1}{2}\Phi_{r}'\left(\frac{x+1}{2}\right)-\Phi'_r(x)\right)+\frac{\log^{k-r}(x+1)}{(x+1)^2}\Phi_{r}'\left(\frac{2x}{x+1}\right)\bigg]\nonumber\\
	=\Phi_{k}'(x)-\Phi_{k}'(2x)+ \sum_{r=1}^{k}\binom{k}{r}\tilde{\gamma}_{k-r} \left[\frac{\log^r(x+1)}{2(x+1)}+\frac{\log^r(2x)}{2x(x+1)}-\frac{\log^r(x)}{2x}\right].
\end{multline*}
\end{corollary}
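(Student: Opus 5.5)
The plan is to specialize Theorem \ref{Multi-term-functional-equation} to $\widetilde{T}_n=\widehat{T}_n$ and then unfold the generalized slash operator \eqref{genslash}. First, recall that by the result of Radchenko and Zagier quoted above, $\widehat{T}_n$ acts like the $n$-th Hecke operator on periods, i.e.\ satisfies \eqref{T_relation}. Every matrix in $\widehat{T}_n$ has $0\le c<a$ and $0\le b<d$, so all entries are non-negative, and each coefficient is $v_\gamma=1$. Hence the hypotheses of Theorem \ref{Multi-term-functional-equation} hold, and the only work is to rewrite both sides.

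For the left side, I use \eqref{genslash} with weight $m=2$, $\ell=k-r$ and $ad-bc=n$:
\begin{align*}
\bigl(\Phi'_r|_2^{(k-r)}\gamma\bigr)(x)=\frac{n\,\log^{k-r}(cx+d)}{(cx+d)^2}\,\Phi'_r\!\left(\frac{ax+b}{cx+d}\right).
\end{align*}
After summing over $\gamma$ with the prefactor $\frac1n$, the factor $n$ cancels. The subtracted term $\frac1n\cdot n\sum_{\ell\mid n}\frac{\log^{k-r}(\ell)}{\ell}\Phi'_r(x)$ becomes exactly $\sum_{\ell\mid n}\frac{\log^{k-r}(\ell)}{\ell}\Phi'_r(x)$. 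The right side of Theorem \ref{Multi-term-functional-equation} is copied verbatim with $v_\gamma=1$. This gives the general identity.

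For $n=2$, I list the four matrices of $\widehat{T}_2$ and evaluate each term, using the convention $\log^0(1)=1$, so that $\log^{k-r}(1)$ vanishes unless $r=k$.
\begin{itemize}
\item $\bigl[\begin{smallmatrix}1&0\\0&2\end{smallmatrix}\bigr]$ and $\bigl[\begin{smallmatrix}1&1\\0&2\end{smallmatrix}\bigr]$ give $\frac{\log^{k-r}2}{4}\Phi'_r(\frac{x}{2})$ and $\frac{\log^{k-r}2}{4}\Phi'_r(\frac{x+1}{2})$.
\item $\bigl[\begin{smallmatrix}2&0\\0&1\end{smallmatrix}\bigr]$ contributes only when $r=k$, giving $\Phi'_k(2x)$.
\item $\bigl[\begin{smallmatrix}2&0\\1&1\end{smallmatrix}\bigr]$ gives $\frac{\log^{k-r}(x+1)}{(x+1)^2}\Phi'_r(\frac{2x}{x+1})$.
\end{itemize}
In the divisor sum, $\ell=2$ gives $\frac{\log^{k-r}2}{2}\Phi'_r(x)$. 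The divisor $\ell=1$ contributes only at $r=k$, giving $\Phi'_k(x)$. Moving the two isolated terms $\Phi'_k(2x)$ and $-\Phi'_k(x)$ to the right side produces $\Phi'_k(x)-\Phi'_k(2x)$ there, and the remaining terms group into the stated bracket.

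On the right side, the four matrices contribute, in the same order,
\begin{align*}
\frac{\log^r x}{2x},\quad \frac{\log^r(x+1)}{2(x+1)},\quad \frac{\log^r(2x)}{2x},\quad \frac{\log^r(2x)}{2x(x+1)}.
\end{align*}
The divisor sum subtracts $\frac{\log^r x}{x}+\frac{\log^r(2x)}{2x}$. The two $\frac{\log^r(2x)}{2x}$ terms cancel, and $\frac{\log^r x}{2x}-\frac{\log^r x}{x}=-\frac{\log^r x}{2x}$, which is the stated bracket.

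The main obstacle is purely bookkeeping: the degenerate factors $\log^{0}(1)$ must be handled consistently, so that the $r=k$ terms from $\ell=1$ and from $\bigl[\begin{smallmatrix}2&0\\0&1\end{smallmatrix}\bigr]$ are correctly isolated and moved across the equation.
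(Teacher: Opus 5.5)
Your proposal is correct and follows the paper's route. The corollary is just Theorem~\ref{Multi-term-functional-equation} specialized to $\widetilde{T}_n=\widehat{T}_n$, which has non-negative entries and all $v_\gamma=1$; the factor $n=ad-bc$ from the weight-$2$ slash cancels the prefactor $\frac{1}{n}$. Your explicit $n=2$ bookkeeping checks out and reproduces the stated identity, including isolating the $r=k$ terms that arise from $\log^{0}(1)=1$ for the matrix $\bigl[\begin{smallmatrix}2&0\\0&1\end{smallmatrix}\bigr]$ and for the divisor $\ell=1$.
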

\subsection*{Remarks} An essentially equivalent version of $\widehat{T}_n$ was earlier introduced by Merel \cite{Merel}, namely
\begin{align*}
	\widehat{T}_n^t:=\sum_{\substack{0\le c <a \\ 0\le b <d \\ ad-bc=n}}\begin{bmatrix}
		a &b\\
		c & d    \end{bmatrix}^t,
\end{align*}
which also can be considered as an example of  $\widetilde{T}_n$. An application of $\widehat{T}_n^t$ in Theorem \ref{Multi-term-functional-equation} provides another family of multi-term functional equations of $\Phi'_k$.

The paper is organized as follows. In Section 2, we study an action of $\widetilde{T}_n$ on the function $\mathscr{C}^*(x,y)$. In Section 3, we applied Theorem \ref{Zagier_type_C_relation} to establish that the function $\psi^-_s(x)$ is a Hecke eigenform under the action of $\widetilde{T}_n$. Finally, in Section 4, we utilize the fact that both the functions $\psi^+_s(x)$ and $\psi^-_s(x)$ are Hecke eigenform under the action of $\widetilde{T}_n$, to prove Theorem \ref{Multi-term-functional-equation}.

\section{Hecke action on a variant of cotangent function}
This section mainly concerns about the action of the operator $\widetilde{T}_n$ on the function $\mathscr{C}^*(x,y)$ for all $(x,y) \in \mathbb{C}^2$. We first consider a right ideal in $\mathcal{R}$, given by
 \begin{align}
	\mathscr{L}^*:=\{\xi \in \mathcal{R}:\mathscr{C}^*\circ\xi=\text{constant}\}.\nonumber
\end{align}
For $\delta: \mathbb{C} \to \{0,1\}$ denoting the characteristic function of $\mathbb{Z}$, we define a complex valued function in $\mathbb{C}^2$, given by 
\begin{align}\label{Deltatilde}
\tilde{\delta}(x,y):=\delta(y).
\end{align}
The following proposition states a necessary and sufficient condition for an element to belong to  $\mathscr{L}^*$.
\begin{proposition}\label{Zagier_type_proposition}
	An element $\xi \in \mathcal{R}$ lies in the ideal $\mathscr{L}^*$ if and only if $(1-S)\xi \in \operatorname{Ker}(\Phi)$ where $\Phi:\mathcal{R}\to \mathbb{Q}(u,v)\otimes \mathcal{V}$ is a homomorphism defined for $\gamma=\begin{bmatrix}
		a & b\\
		c & d
	\end{bmatrix}\in \mathcal{M}$ by
	\begin{align}
		\Phi(\gamma)=\frac{1}{cu+dv}\otimes \tilde{\delta}\circ \gamma,\nonumber
	\end{align}
	where $\mathcal{V}$ is the space of even complex valued function.
\end{proposition}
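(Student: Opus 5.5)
The plan is to reduce the statement to a pole analysis of sums of cotangents of integral linear forms. First I rewrite $\mathscr{C}^*\circ\xi$ via the function $g(x,y):=C(y)$, so that $\Phi$ appears naturally. For $\gamma=\begin{bmatrix} a&b\\ c&d\end{bmatrix}$ we have $g\circ\gamma(x,y)=C(cx+dy)$. Also $S\gamma=\begin{bmatrix} -c&-d\\ a&b\end{bmatrix}$, so $g\circ(S\gamma)(x,y)=C(ax+by)$. Hence
\begin{align*}
g\circ\big((1-S)\gamma\big)(x,y)=C(cx+dy)-C(ax+by)=-(\mathscr{C}^*\circ\gamma)(x,y).
\end{align*}
Since $C$ is odd, this is compatible with working modulo $\pm1$. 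By linearity, $\mathscr{C}^*\circ\xi=-\,g\circ\eta$ with $\eta=(1-S)\xi$. So it suffices to prove: for $\eta=\sum_i\lambda_i\gamma_i\in\mathcal{R}$, the function $g\circ\eta$ is constant if and only if $\Phi(\eta)=0$.

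\emph{Polar parts.} Each $g\circ\gamma_i$ is meromorphic on $\mathbb{C}^2$. Its only singularities are simple poles along the real hyperplanes $c_ix+d_iy=m$, $m\in\mathbb{Z}$, coming from $\cot(\pi t)\sim 1/(\pi(t-m))$. Fix a point $(x,y)\in\mathbb{C}^2$ and a direction $(u,v)$, and compute
\begin{align*}
\lim_{\varepsilon\to0}\varepsilon\,(g\circ\eta)(x+\varepsilon u,y+\varepsilon v)=\frac{1}{\pi}\sum_i\lambda_i\frac{\delta(c_ix+d_iy)}{c_iu+d_iv}.
\end{align*}
For generic $(u,v)$ this is exactly $\frac1\pi$ times $\Phi(\eta)$ evaluated at $(u,v)$ and $(x,y)$. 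The $\pm$ ambiguity of $\gamma$ cancels, since both the fraction and $\tilde\delta\circ\gamma$ are compatible with $\gamma\mapsto-\gamma$; this is why $\mathcal{V}$ consists of even functions. The identity is checked term by term: a non-integer value of $c_ix+d_iy$ gives a holomorphic term, and an integer value gives the displayed residue. Consequently, if $g\circ\eta$ is constant then all these limits vanish, so $\Phi(\eta)=0$. Conversely, if $\Phi(\eta)=0$ then every polar contribution cancels, and $g\circ\eta$ extends to an entire function on $\mathbb{C}^2$. The one point to check is that cancellation of the residue along each line, in every direction, removes the singularity. This holds because all terms have at most simple poles along the same finite set of hyperplanes near any point, and the Riemann extension theorem then applies.

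\emph{Entire implies constant.} This is the step I expect to need the most care. Since all entries are integers, $h:=g\circ\eta$ is $1$-periodic in $x$ and in $y$. Fix $y$ with $y\notin\mathbb{R}$, chosen generically, and view $h(\cdot,y)$ as a holomorphic function of $q=e^{2\pi i x}$ on $\mathbb{C}^*$. Each term with $c_i\neq0$ tends to $\mp i$ as $\operatorname{Im}x\to\pm\infty$, since $|\operatorname{Im}(c_ix+d_iy)|\to\infty$. Terms with $c_i=0$ do not depend on $x$. So $h(\cdot,y)$ is bounded near $q=0$ and $q=\infty$, hence constant in $x$ by Liouville. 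By symmetry, or by the same argument in $y$ with generic nonreal $x$, $h$ is also constant in $y$ on an open set. Therefore $h$ is constant everywhere.

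Combining the two parts: $\xi\in\mathscr{L}^*$ if and only if $g\circ(1-S)\xi$ is constant, if and only if $(1-S)\xi\in\operatorname{Ker}(\Phi)$. Finally, I will verify that $\mathscr{L}^*$ is a right ideal. If $\mathscr{C}^*\circ\xi$ is constant, then composing with further matrices keeps it constant, because the action is on the right and $\gamma\mapsto(f\mapsto f\circ\gamma)$ respects products.
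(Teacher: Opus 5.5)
Your route is the paper's. The directional expansion of $C$ at $(x_0,y_0)$ exhibits the polar part of $\mathscr{C}^*\circ\xi$ as $-\frac{1}{\pi\epsilon}\Phi((1-S)\xi)$; your identity $\mathscr{C}^*\circ\xi=-g\circ(1-S)\xi$ with $g(x,y)=C(y)$ is a clean way to package this. The paper only delegates the ``pole-free $\Rightarrow$ constant'' step to Radchenko--Zagier, and you carry it out correctly via periodicity and Liouville in $q=e^{2\pi i x}$. The ``only if'' direction is fine.

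The ``if'' direction has a gap. $\mathscr{C}^*\circ\xi$ is not a meromorphic function: it is defined at \emph{every} point of $\C^2$, and on the lines $c_ix+d_iy\in\Z$ its terms take the conventional value $C(m)=0$. You show that the restriction of $h=g\circ\eta$ to the complement of these lines extends to an entire $H$, and that $H$ is constant. You then conclude that $h$ is constant everywhere, but you never check that $h=H$ \emph{on} the lines, which is part of what the proposition asserts. This is exactly where the normalization $C|_{\Z}=0$ is used. The paper encodes it in the expansion $C(x+\epsilon)=\frac{\delta(x)}{\pi\epsilon}+C(x)+\mathcal{O}(\epsilon)$, which is valid also for $x\in\Z$ and therefore gives continuity of the actual function $\mathscr{C}^*\circ\xi$, not merely of a meromorphic continuation.

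The repair is short. Let $k(t)=\cot(\pi t)-\frac{1}{\pi t}$. Then $k$ extends holomorphically to $t=0$, and its value there is $0$ because $k$ is odd. Near $(x_0,y_0)$ put $I_0=\{i:\ c_ix_0+d_iy_0\in\Z\}$ and $L_i=c_i(x-x_0)+d_i(y-y_0)$. Off the lines $L_i=0$ we have
\begin{align*}
h=\sum_{i\in I_0}\frac{\lambda_i}{\pi L_i}+H_0,\qquad H_0:=\sum_{i\in I_0}\lambda_i\,k(L_i)+\sum_{i\notin I_0}\lambda_i\,C(c_ix+d_iy).
\end{align*}
The first sum vanishes identically: it is $\frac1\pi$ times the rational function $\sum_{i\in I_0}\lambda_i/(c_iu+d_iv)$, which equals $\Phi(\eta)$ evaluated at $(x_0,y_0)$ and hence is $0$, evaluated at $(u,v)=(x-x_0,y-y_0)$. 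So no appeal to the Riemann extension theorem is needed. At the centre, $h(x_0,y_0)=H_0(x_0,y_0)$ precisely because $C(m)=0=k(0)$. Doing this at every point shows that $h$ agrees with a holomorphic function on a full neighbourhood of each point, lines included. Only then does your Liouville argument give that $\mathscr{C}^*\circ\xi$ itself is constant.

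One minor correction: the $\pm$ ambiguity does not ``cancel'' in $\Phi$. Since $\delta$ is even and $1/(cu+dv)$ is odd, $\Phi(-\gamma)=-\Phi(\gamma)$, just as $g\circ(-\gamma)=-g\circ\gamma$. Your identity $\lim_{\varepsilon\to0}\varepsilon\,(g\circ\eta)(x+\varepsilon u,y+\varepsilon v)=\frac1\pi\Phi(\eta)$ simply holds for any fixed choice of representatives, which is all the argument needs.
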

\begin{proof}
	The definitions of $C$ and $\delta$ together with the Taylor's expansion of the cotangent function around the origin yield as $\epsilon \to 0$,
	\begin{align}\label{Expansion of cotangent}
		C(x+\epsilon)=\frac{\delta(x)}{\pi \epsilon}+C(x)+\mathcal{O}(\epsilon),
	\end{align}
for any complex number $x$. We next check the continuity of the function $\mathscr{C}^*\circ \xi $ for some $\xi \in \mathcal{R}$ at some point $(x_0,y_0) \in \mathbb{C}^2$ and for that we set $(x,y)=(x_0+\epsilon u, y_0+\epsilon v)$, where $\epsilon$ is arbitrarily small. Now, the expansion in \eqref{Expansion of cotangent} implies 
	\begin{align}
		\mathscr{C}^*(ax+by,cx+dy)
		&= C(ax_0+by_0+\epsilon(au+bv))-C(cx_0+dy_0+\epsilon(cu+dv))\nonumber\\
		&=\frac{\delta(ax_0+by_0)}{\pi \epsilon(au+bv)}-\frac{\delta(cx_0+dy_0)}{\pi \epsilon (cu+dv)}+\mathscr{C}^*(ax_0+by_0, cx_0+dy_0)+\mathcal{O}(\epsilon).\nonumber
		\end{align}
For $\gamma=\begin{bmatrix}
		a & b\\
		c & d
	\end{bmatrix}\in \mathcal{M}$ and $S=\begin{bmatrix}
		0 & -1\\
		1 & 0
	\end{bmatrix}$, we can rewrite the above equation as	
		\begin{align}
		\mathscr{C}^*(ax+by,cx+dy)-\mathscr{C}^*(ax_0+by_0, cx_0+dy_0) 
		=-\frac{1}{\pi \epsilon}\Phi((1-S)\gamma)(x_0,y_0)+\mathcal{O}(\epsilon).\nonumber
	\end{align}
Therefore, the function $f=\mathscr{C}^*\circ \xi$ is continuous on $\mathbb{C}^2$ if and only if $(1-S)\gamma\in \operatorname{Ker}(\Phi)$.
The rest of the proof follows from an argument analogous to the one used in the proof of \cite[Proposition 2]{Radchenko}.
	\end{proof} 
	
We are now ready to prove Theorem \ref{Zagier_type_C_relation}.
\subsection{Proof of Theorem \ref{Zagier_type_C_relation}}
It follows from the definition \eqref{Deltatilde} of $\tilde{\delta}(x,y)$ that for any $Y\in \mathcal{R}_n$, we have $\tilde{\delta}\circ(1-T)Y = 0$. Therefore, the relation \eqref{T_relation} satisfied by the operator $\widetilde{T}_n$, yields
\begin{align}\label{phi-T-relation}
	\Phi((1-S)\widetilde{T}_n)=\Phi(T_n^\infty(1-S)).
\end{align}
The definition of the homomorphism $\Phi$ turns the right hand side of the above equation to
\begin{align}\label{phi-l-relation}
	\Phi(T_n^\infty(1-S))
	&= \sum_{ad=n}\sum_{0\le b <d}\left[\frac{\delta(dy)}{dv}-\frac{\delta(dx)}{du}\right]\nonumber\\
	&= \sum_{\ell \mid n}\ell \left[\frac{\delta(\ell y)}{\ell v}-\frac{\delta(\ell x)}{\ell u}\right]\nonumber\\
	&= \Phi\left(\sum_{\ell \mid n}\ell \begin{bmatrix}
		\ell & 0 \\
		0 & \ell
	\end{bmatrix}(1-S)\right).
\end{align}
Thus, \eqref{phi-T-relation} and \eqref{phi-l-relation} together imply
 \begin{align}
	(1-S)\left(\widetilde{T}_n-\sum_{\ell \mid n}\ell \begin{bmatrix}
		\ell & 0 \\
		0 & \ell
	\end{bmatrix}\right)\in \operatorname{Ker}(\Phi).\nonumber
\end{align}
Employing Proposition \ref{Zagier_type_proposition}, we can now write that for $\hat{\xi}_n = \widetilde{T}_n-\sum\limits_{\ell \mid n}\ell \begin{bmatrix}
		\ell & 0 \\
		0 & \ell
	\end{bmatrix}$,
 \begin{align}
	\mathscr{C}^*\circ \hat{\xi}_n = \text{constant}.\nonumber
\end{align}
We next concentrate in determining the value of the above constant function on the right hand side and for that we evaluate the value of the function on the left hand side at $(it, i(1+\epsilon)t)$ 
as $t\to \infty$ and $\epsilon \to 0+$. 

For any non-zero real number $\alpha$,
\begin{align}
	\lim_{t \to \infty} C(i\alpha t)=\lim_{t \to \infty} \cot (\pi i \alpha t)= \lim_{t\to \infty}i \frac{e^{-\pi \alpha t}+e^{\pi \alpha t}}{e^{-\pi \alpha t}-e^{\pi \alpha t}}= -i\operatorname{sgn}(\alpha),\nonumber
\end{align}
which concludes that for any $\g=\begin{bmatrix}
	a & b \\
	c  & d
\end{bmatrix} \in \mathcal{M}_n$, 
\begin{align}\label{Cgamma}
\lim_{t \to \infty}	(\mathscr{C}^*\circ \g)(it,i(1+\epsilon)t)
&=\lim_{t \to \infty}\left[C(it(a+b+b\epsilon))-C(it(c+d+d\epsilon))\right]\nonumber\\
&= -i\left[\operatorname{sgn}(a+b+b\epsilon)-\operatorname{sgn}(c+d+d\epsilon)\right],
\end{align}
and also for any positive integer $\ell$, 
 \begin{align}\label{Cl}
	\lim_{t \to \infty}\left(\mathscr{C}^*\circ  \begin{bmatrix}
		\ell & 0\\
		0 & \ell
	\end{bmatrix}\right) (it,(1+\epsilon)t)&=	\lim_{t \to \infty} \mathscr{C}^*(i\ell t, i (1+\epsilon)\ell t)\nonumber\\
	&=	\lim_{t \to \infty}\left[ C(i\ell t)-C(i(1+\epsilon)\ell t) \right]\nonumber\\
	&= -i\left[ \operatorname{sgn}(\ell)- \operatorname{sgn}((1+\epsilon)\ell)\right].
\end{align}
Finally, the constant map can be determined by taking the limit as $\epsilon \to 0+$ in both \eqref{Cgamma} and \eqref{Cl}. This completes the proof of the theorem.
\qed
\section{Hecke Eigenform}
In this section, we show that the function $\psi_s^-(x)$, defined in \eqref{1st_eg}, is a Hecke eigenform with respect to the Hecke operator $\widetilde{T}_n$.
\subsection{Proof of Theorem \ref{eigenform_1st_example_lewis_zagier}}
The following expression of the cotangent function 
$$\cot(x)= i \frac{1+e^{-2i x}}{1-e^{-2ix}}$$
transforms the function $\mathscr{C}^*(ixt,iyt)$, for $x$, $y >0$, into
\begin{align}
	\mathscr{C}^*(ixt,iyt)= C(ixt)-C(iyt)&=\cot(\pi i xt)-\cot(\pi i yt)\nonumber\\
	&= -2i \left(\frac{1}{e^{2\pi xt}-1}-\frac{1}{e^{2\pi y t}-1}\right).\nonumber
\end{align}
Thus, we can express the following integral as,
\begin{align}\label{C_integral_simplification}
	\frac{1}{\Gamma(2s)\z(2s)}\int_{0}^{\infty} i \mathscr{C}^*(ixt,iyt) t^{2s-1}~ dt&= 	\frac{2}{\Gamma(2s)\z(2s)}\int_{0}^{\infty}\left(\frac{1}{e^{2\pi xt}-1}-\frac{1}{e^{2\pi y t}-1}\right)t^{2s-1}~dt\nonumber\\
	&=\frac{2(2\pi)^{-2s}}{\Gamma(2s)\z(2s)}\int_{0}^{\infty}\left(\frac{1}{e^{xt}-1}-\frac{1}{e^{ y t}-1}\right)t^{2s-1}~dt\nonumber\\
	&=2(2\pi)^{-2s}\left[x^{-2s}-y^{-2s}\right],
\end{align}
where in the penultimate step we made a change of variable $t$ by $\frac{t}{2\pi}$ and the final step follows from the well-known integral representation (cf. \cite[P. 251, Theorem 12.2]{Apostol})
\begin{align}
	\Gamma(s)\zeta(s)= \int_{0}^{\infty} \frac{t^{s-1}}{e^t-1}~dt \qquad (\Re(s)>1).\nonumber
\end{align}
Therefore, \eqref{C_integral_simplification} implies 
\begin{align}\label{C_relation_psi}
	\frac{-(2\pi)^{2s}}{2\Gamma(2s)\z(2s)}\int_{0}^{\infty} i \mathscr{C}^*(ixt,iyt) t^{2s-1}~ dt=\frac{1}{y^{2s}}\left[1-\left(\frac{x}{y}\right)^{-2s}\right].
\end{align}
We next  apply $\widetilde{T}_n$ on both side of the above equation. 
Invoking Theorem \ref{Zagier_type_C_relation} and observing the fact that $c(\widetilde{T}_n)=0$, as all its entries are non-negative by assumption, the left hand side of \eqref{C_relation_psi} reduces to
\begin{align}\label{lhs}
	\frac{-(2\pi)^{2s}}{2\Gamma(2s)\z(2s)}\int_{0}^{\infty} i \mathscr{C}^*(ixt,iyt) \circ \widetilde{T}_n~ t^{2s-1}~ dt
	&=\frac{-(2\pi)^{2s}i}{2\Gamma(2s)\z(2s)}\int_{0}^{\infty} 
	\sum_{\ell \mid n} \ell \mathscr{C}^*(i\ell xt,i\ell yt)~t^{2s-1}~ dt\nonumber\\
	&= \sum_{\ell \mid n} \ell \frac{1}{(\ell y)^{2s}}\left[1-\left(\frac{x}{y}\right)^{-2s}\right]\nonumber\\
	&= \sigma_{1-2s}(n)\frac{1}{y^{2s}}\left[1-\left(\frac{x}{y}\right)^{-2s}\right].
\end{align}
where the penultimate step follows from \eqref{C_relation_psi}.
Applying $\widetilde{T}_n$ on the right hand side of \eqref{C_relation_psi}, we obtain
\begin{align}\label{rhs}
	\left(\frac{1}{y^{2s}}\left[1-\left(\frac{x}{y}\right)^{-2s}\right]\right)\circ \widetilde{T}_n
	&=\left(\frac{1}{y^{2s}}\left[1-\left(\frac{x}{y}\right)^{-2s}\right]\right)\circ \sum_{\g}v_{\g}\g\nonumber\\
	&= \sum_{\g} \frac{v_{\g}}{(cx+dy)^{2s}} \left[1-\left(\frac{ax+by}{cx+dy}\right)^{-2s}\right].
\end{align}
Thus, \eqref{lhs} and \eqref{rhs} together yield
\begin{align}
	\sum_{\g} \frac{v_{\g}}{(cx+dy)^{2s}} \left[1-\left(\frac{ax+by}{cx+dy}\right)^{-2s}\right]=\sigma_{1-2s}(n)\frac{1}{y^{2s}}\left[1-\left(\frac{x}{y}\right)^{-2s}\right].\nonumber
\end{align}
Finally, we substitute $y=1$ and  multiply $n^s$ on the both sides of the above equation to conclude the proof of our result.
\qed
\section{Hecke-type action on $\Phi'_{k}(x)$}\label{Main Results}
In this section, we first relate the function $\Phi'_{k}(x)$ with the function $\psi^+_s(x)$ with spectral parameter $\frac{s+1}{2}$ and then take an advantage of the fact that both the functions $\psi^+_s(x)$ and $\psi^-_s(x)$ are Hecke eigenforms with respect to the operator $\widetilde{T}_n$, to establish Theorem \ref{Multi-term-functional-equation}.

Dixit et al. \cite[p. 7, Equation (1.15)]{DixitMTZ} introduced a new generalization of the Herglotz-Zagier function, defined for $\Re(s)>0, \,  s \neq 1$ and $x \in \mathbb{C}\setminus (-\infty,0]$, namely
\begin{align}
	\Phi(s,x)= \sum_{n=1}^{\infty} \frac{1}{n} \bigg[\zeta(s,nx)-\frac{(nx)^{1-s}}{s-1}\bigg],\nonumber
\end{align}
where $\zeta(s,x)$ denotes the Hurwitz zeta function, given by $\zeta(s,x)=\sum_{m=0}^{\infty}\frac{1}{(m+x)^s}$. The authors termed this function as Herglotz-Hurwitz function. In the same article, the authors established an elegant connection between $\Phi(s,x)$ and the $k$-th order Herglotz-Zagier function $\Phi_{k}(x)$, which we state in the following lemma.
  \begin{lemma}\label{Dixit's-Lemma}
   Let $k$ be any positive integer. We have
\begin{align}
	\lim_{s \to 1} \frac{\partial^{k-1}}{\partial s^{k-1}}\Phi(s,x)=\frac{(-1)^{k}}{k}\Phi_{k}(x).\nonumber
\end{align}
\end{lemma}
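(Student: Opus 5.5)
The plan is to expand $\Phi(s,x)$ in a Laurent/Taylor series around $s=1$ and to identify the derivatives at $s=1$ with the coefficients of a series built from $\Phi_k$. The natural starting point is the integral representation
$$\psi_{k-1}(z)=\log^{k-1}z-\frac{\log^{k-1}z}{2z}\cdot(\text{lower terms})+\dots,$$
but it is cleaner to work with the Hurwitz zeta function directly.

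\textbf{Step 1: local expansion of one summand.} Fix $y=nx$ with $y\in\C\setminus(-\infty,0]$. The function
$$g(s,y):=\zeta(s,y)-\frac{y^{1-s}}{s-1}$$
is entire in $s$. The pole of $\zeta(s,y)$ at $s=1$ has residue $1$, and it cancels against the pole of $y^{1-s}/(s-1)$. Expanding both pieces about $s=1$, we use
$$\zeta(s,y)=\frac{1}{s-1}+\sum_{j\ge0}\frac{(-1)^j}{j!}\gamma_j(y)(s-1)^j,$$
where $\gamma_j(y)$ are the generalized Stieltjes constants, together with
$$\frac{y^{1-s}}{s-1}=\frac{1}{s-1}+\sum_{j\ge0}\frac{(-1)^{j+1}\log^{j+1}y}{(j+1)!}(s-1)^j.$$
Subtracting, we obtain
$$\partial_s^{k-1}g(1,y)=(-1)^{k-1}\Big(\gamma_{k-1}(y)+\frac{\log^k y}{k}\Big).$$

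\textbf{Step 2: identification with $\psi_{k-1}$.} Since $\psi_{k-1}(y)=-\gamma_{k-1}(y)$, the bracket becomes
$$-\psi_{k-1}(y)+\frac{\log^k y}{k}=-\frac{1}{k}\big(k\psi_{k-1}(y)-\log^k y\big).$$
Hence $\partial_s^{k-1}g(1,nx)=\frac{(-1)^k}{k}\big(k\psi_{k-1}(nx)-\log^k(nx)\big)$. Dividing by $n$ and summing over $n$ gives exactly $\frac{(-1)^k}{k}\Phi_k(x)$, provided the derivative may be taken inside the sum.

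\textbf{Step 3: interchange of limit, derivative and sum (main obstacle).} I would show that the series $\sum_n g(s,nx)/n$ converges locally uniformly for $s$ in a neighbourhood of $1$. Once that is established, Weierstrass' theorem allows term-by-term differentiation, and the limit $s\to1$ is just evaluation of the resulting holomorphic function.

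For the uniform bound I would use the Euler–Maclaurin asymptotic
$$\zeta(s,y)=\frac{y^{1-s}}{s-1}+\frac{1}{2}y^{-s}+O\big(|y|^{-\Re s-1}\big),$$
valid uniformly for $s$ in compacts and $|\arg y|\le\pi-\delta$. This gives $g(s,nx)=O\big((n|x|)^{-\Re s}\big)$, so the series is dominated by $\sum_n n^{-1-\Re s}$, which converges for $\Re s>0$. This is uniform near $s=1$, including at $s=1$ itself where $g$ is regular.

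The remaining care is to check that the $O$-constant is uniform in $s$ across $s=1$. This holds because the Euler–Maclaurin remainder integral is holomorphic in $s$ and does not involve the pole term.
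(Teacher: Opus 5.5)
Your proof is correct. It does not follow the paper's proof, because the paper gives none: the lemma is quoted from Dixit et al., as is the relation $\psi_k=-\gamma_k$ stated in the introduction, which you also rely on.

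Your route is as follows. You cancel the pole termwise via $g(s,y)=\zeta(s,y)-y^{1-s}/(s-1)$. You then read off the $(s-1)^{k-1}$ coefficient from the Laurent expansion of $\zeta(s,y)$ in generalized Stieltjes constants, and convert with $\psi_{k-1}=-\gamma_{k-1}$. This gives exactly $\partial_s^{k-1}g(1,nx)=\frac{(-1)^k}{k}\bigl(k\psi_{k-1}(nx)-\log^k(nx)\bigr)$.

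Your Step 3 supplies what the bare citation leaves implicit. The map $s\mapsto\sum_n g(s,nx)/n$ is holomorphic on a full neighbourhood of $s=1$. Hence the singularity of $\Phi(s,x)$ at $s=1$, where the paper leaves it undefined, is removable. The limit in the statement is then just evaluation of the $(k-1)$-st derivative of the extension, and convergence of the series defining $\Phi_k(x)$ comes for free.

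Two small clean-ups:
\begin{itemize}
\item Delete the opening aside about $\psi_{k-1}(z)$. It is unused, and wrong as written: the leading term of $\psi_{k-1}(z)$ is $\log^k z/k$, not $\log^{k-1}z$.
\item The one-step Euler--Maclaurin formula $\zeta(s,y)=\frac{y^{1-s}}{s-1}+\frac12 y^{-s}-s\int_0^\infty(\{t\}-\frac12)(t+y)^{-s-1}\,dt$ bounds the remainder directly only by $O(|y|^{-\Re s})$. One more integration by parts gives your $O(|y|^{-\Re s-1})$. However, the weaker bound already gives domination by a constant times $n^{-1-\Re s}$, and that is all you need. The bound is uniform in $n$ because $\arg(nx)=\arg x$ and $|nx|\ge|x|$.
\end{itemize}
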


Let $ \Phi_x\left(s,x_0\right)$ be the value of the partial derivative of $\Phi(s,x)$ with respect to $x$ at the point $x_0$.  The following lemma relates $ \Phi_x\left(s,x\right)$ with Lewis-Zagier's second example  $\psi^+_s(x)$, defined in \eqref{lweis_zagier_second_example}.
\begin{lemma}\label{F-relation-derivative-Phi}
	For $x>0$, we have
	\begin{align}
		\Phi_x\left(s,x\right)= x^{-s}\zeta(s)+\frac{1}{2}\left(1 - x^{-(s+1)}\right)s\zeta(s+1) -s\psi^+_{\frac{s+1}{2}}(x).\nonumber
	\end{align}
\end{lemma}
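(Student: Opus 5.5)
The plan is to differentiate the defining series of $\Phi(s,x)$ term by term, first in the half-plane $\Re(s)>1$ where every series involved converges absolutely, and to identify the resulting double sum with $\psi^+_{\frac{s+1}{2}}(x)$ by separating the boundary terms. The identity then extends to the full range of $s$ by analytic continuation.

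\textbf{Step 1: termwise differentiation.} Since $\frac{\partial}{\partial x}\zeta(s,x)=-s\,\zeta(s+1,x)$, the chain rule gives $\frac{\partial}{\partial x}\zeta(s,nx)=-sn\,\zeta(s+1,nx)$. Similarly,
\begin{align*}
\frac{\partial}{\partial x}\,\frac{(nx)^{1-s}}{s-1}=-n\,(nx)^{-s}.
\end{align*}
Hence, formally,
\begin{align*}
\Phi_x(s,x)=\sum_{n=1}^{\infty}\Big[(nx)^{-s}-s\,\zeta(s+1,nx)\Big].
\end{align*}
To justify this, I would use the asymptotic expansion
\begin{align*}
\zeta(s,y)-\frac{y^{1-s}}{s-1}=\frac{y^{-s}}{2}+O\big(y^{-s-1}\big).
\end{align*}
It shows that the original series, and the differentiated one, converge locally uniformly in $x>0$, which permits differentiation term by term.

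\textbf{Step 2: splitting the sum for $\Re(s)>1$.} For $\Re(s)>1$ the two pieces converge separately. The first piece gives $\sum_{n\ge1}(nx)^{-s}=x^{-s}\zeta(s)$. For the second, I would write
\begin{align*}
\sum_{n\ge1}\zeta(s+1,nx)=\sum_{n\ge1}\sum_{m\ge0}(nx+m)^{-(s+1)}.
\end{align*}
This is the Lewis--Zagier sum \eqref{lweis_zagier_second_example} with $2s'=s+1$, restricted to coefficient of $x$ at least $1$. Comparing with the weights $\tfrac12$ on the axes in the definition of $\psi^+_{s'}$, we have
\begin{align*}
\psi^+_{\frac{s+1}{2}}(x)=\sum_{m,n\ge1}(nx+m)^{-(s+1)}+\tfrac12\zeta(s+1)+\tfrac12x^{-(s+1)}\zeta(s+1).
\end{align*}
The restricted sum equals $\sum_{m,n\ge1}(nx+m)^{-(s+1)}+x^{-(s+1)}\zeta(s+1)$. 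Therefore
\begin{align*}
\sum_{n\ge1}\zeta(s+1,nx)=\psi^+_{\frac{s+1}{2}}(x)-\tfrac12\zeta(s+1)+\tfrac12x^{-(s+1)}\zeta(s+1).
\end{align*}
Multiplying by $-s$ and adding $x^{-s}\zeta(s)$ yields exactly the claimed formula for $\Re(s)>1$.

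\textbf{Step 3: extension to the full range of $s$.} To reach the range where $\Phi(s,x)$ is defined, I would argue that both sides are analytic in $s$ for fixed $x>0$ and invoke the identity theorem. The left side is analytic by the locally uniform convergence of the series from Step 1. On the right, $\psi^+_{\frac{s+1}{2}}$ has a meromorphic continuation, as in Lewis--Zagier. Its only possible pole in this region, at $s=1$, must cancel against the pole of $x^{-s}\zeta(s)$ there, since the left side is regular.

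\textbf{Main obstacle.} The bookkeeping of the multiplicity-$\tfrac12$ boundary terms in Step 2 is mechanical. The real care goes into the uniform-convergence estimates that justify termwise differentiation, and into the analytic continuation of $\psi^+$ when passing below $\Re(s)=1$.
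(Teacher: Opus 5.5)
Your proposal is correct and follows essentially the same route as the paper: differentiate termwise to get $x^{-s}\zeta(s)-s\sum_{n\ge1}\sum_{m\ge0}(m+nx)^{-(s+1)}$, split off the $m=0$ terms to produce $x^{-(s+1)}\zeta(s+1)$, and then correct for the multiplicity-$\tfrac12$ boundary terms in $\psi^+_{\frac{s+1}{2}}$. Your Steps 1 and 3 add justification (termwise differentiation and continuation from $\Re(s)>1$) that the paper leaves implicit. One small correction: locally uniform convergence of the differentiated series needs the companion expansion $s\,\zeta(s+1,y)=y^{-s}+\tfrac{s}{2}y^{-s-1}+O(y^{-s-2})$, not only the expansion of $\zeta(s,y)$ that you quote.
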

\begin{proof} It follows from the definition of $\Phi(s,x)$ that,
\begin{align}
	\Phi_x\left(s,x\right)&= \sum_{n=1}^{\infty}\sum_{m=0}^{\infty} \frac{-s}{(m+nx)^{s+1}}+x^{-s}\z(s)\nonumber\\
	&= \sum_{n=1}^{\infty}\sum_{m=1}^{\infty} \frac{-s}{(m+nx)^{s+1}}-x^{-(s+1)}s\zeta(s+1)+x^{-s}\z(s)\nonumber\\
		&=-s\left[\psi^+_{\frac{s+1}{2}}(x) - \frac{1}{2}x^{-(s+1)}\zeta(s+1) - \frac{1}{2}\zeta(s+1)\right]-x^{-(s+1)}s\zeta(s+1)+x^{-s}\z(s).\nonumber
\end{align}
Finally, after simplification, we arrive at our conclusion. 
\end{proof}

We next show that the function $q(z) = \frac{1}{z}$ for $z>0$ is an Hecke eigenform with respect to the operator $\widetilde{T}_n$. 
\begin{lemma}\label{Pole_Cancellation}
		Let $\widetilde{T}_n$ acts like the $n$-th Hecke operator on periods and takes the form $\widetilde{T}_n=\sum_{\gamma}v_{\gamma}\gamma$, where 
$\gamma=\tiny{\begin{bmatrix}
		a & b \\
		c & d
	\end{bmatrix}}$ are the matrices with non-negative entries. Then for $z \in \mathcal{H}$, the following holds:
	\begin{align}
		\left(\frac{1}{z}\right)|_2\widetilde{T}_n =  \sigma_1(n)\left(\frac{1}{z}\right).\nonumber
	\end{align}
\end{lemma}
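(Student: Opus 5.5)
The plan is to obtain the lemma as a degenerate case of Theorem \ref{eigenform_1st_example_lewis_zagier}, by analytic continuation in $s$ down to $s=0$ and then differentiation in $x$.

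The proof of Theorem \ref{eigenform_1st_example_lewis_zagier} established, for $x,y>0$, the homogeneous identity
\begin{align*}
\sum_{\g} \frac{v_{\g}}{(cx+dy)^{2s}} \left[1-\left(\frac{ax+by}{cx+dy}\right)^{-2s}\right]=\sigma_{1-2s}(n)\frac{1}{y^{2s}}\left[1-\left(\frac{x}{y}\right)^{-2s}\right].
\end{align*}
It was derived for $\Re(2s)>1$, where the integral representation converges. Both sides are finite sums of entire functions of $s$, since all quantities $ax+by$, $cx+dy$, $x$, $y$ are positive: the entries of $\g$ are non-negative, and no row of $\g$ vanishes because $\det\g=n\neq0$. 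Hence the identity holds for all $s\in\C$ by analytic continuation.

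Next, divide both sides by $2s$ and let $s\to0$. Since $\frac{1-u^{-2s}}{2s}\to\log u$ and $\sigma_{1-2s}(n)\to\sigma_1(n)$, this gives
\begin{align*}
\sum_{\g} v_{\g}\log\left(\frac{ax+by}{cx+dy}\right)=\sigma_1(n)\log\left(\frac{x}{y}\right) \qquad (x,y>0).
\end{align*}
Now differentiate with respect to $x$. On the left, each term becomes
\begin{align*}
\frac{a}{ax+by}-\frac{c}{cx+dy}=\frac{(ad-bc)y}{(ax+by)(cx+dy)}=\frac{ny}{(ax+by)(cx+dy)}.
\end{align*}
On the right we get $\sigma_1(n)/x$.

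Setting $y=1$ and $x=z>0$ yields
\begin{align*}
\sum_\g v_\g\frac{n}{(az+b)(cz+d)}=\frac{\sigma_1(n)}{z}.
\end{align*}
By the definition of the weight-$2$ slash operator,
\begin{align*}
\left(\frac1z\right)|_2\g=\frac{n}{(cz+d)^2}\cdot\frac{cz+d}{az+b}=\frac{n}{(az+b)(cz+d)},
\end{align*}
so the displayed identity is exactly $\left(\frac1z\right)|_2\widetilde{T}_n=\sigma_1(n)\frac1z$ for $z>0$.

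Both sides are rational functions of $z$, so the identity extends to all $z\in\mathcal{H}$ by the identity theorem; no pole lies in $\mathcal{H}$, because $-b/a$ and $-d/c$ are real.

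The main obstacle is justifying the passage to $s=0$. The integral representation used in Theorem \ref{eigenform_1st_example_lewis_zagier} is only valid for $\Re(2s)>1$. This is why one must work with the finite-sum identity itself, which is entire in $s$ once positivity of $ax+by$ and $cx+dy$ is secured, rather than with the integral.
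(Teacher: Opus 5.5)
Your argument is correct, but it takes a different route from the paper's.

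\textbf{The paper's route.} The paper never uses Theorem \ref{eigenform_1st_example_lewis_zagier} here. It takes the weight-$2$ Eisenstein series $G_2$, a modular integral with $G_2|_2T=G_2$ and $G_2|_2(1-S)=2\pi i/z$. It applies \eqref{T_relation} to $G_2$, so the $(1-T)Y$ term vanishes. It then uses that $G_2$ is a $T_n^\infty$-eigenform with eigenvalue $\sigma_1(n)$, which gives $(2\pi i/z)|_2\widetilde{T}_n=\sigma_1(n)\,G_2|_2(1-S)=\sigma_1(n)\,2\pi i/z$ directly on $\mathcal{H}$. That argument is independent of Sections 2--3 and never uses the non-negativity of the entries. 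The cost is that it imports two outside facts: the quasi-modular transformation law of $G_2$ and its Hecke eigenvalue.

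\textbf{Your route.} You stay inside the paper and exhibit the lemma as the $s\to 0$ degeneration of Theorem \ref{eigenform_1st_example_lewis_zagier}. You take the first-order term in $s$, then apply $\partial_x$, which carries weight $0$ to weight $2$. You prove the identity first for $x>0$, which is exactly the form \eqref{Cancelled pole} used later. Along the way you obtain the stronger identity $\sum_\gamma v_\gamma\log\frac{ax+by}{cx+dy}=\sigma_1(n)\log(x/y)$. The price is that you need non-negativity, to keep $ax+by$ and $cx+dy$ positive, and an analytic continuation in $s$ out of $\Re(2s)>1$; you handle both correctly.

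\textbf{A shortcut.} Your logarithmic identity follows in one line from \eqref{T_relation} alone, with no Theorem \ref{eigenform_1st_example_lewis_zagier}, no Mellin transform and no continuation. Apply \eqref{T_relation} to $f(x,y)=\log|y|$. This $f$ is even, fixed by $T$, and satisfies $f\circ(1-S)=\log|y|-\log|x|$ and $f\circ T_n^\infty=\sum_{d\mid n}d\log|dy|$. The result is $\sum_\gamma v_\gamma\log\bigl|\frac{ax+by}{cx+dy}\bigr|=\sigma_1(n)\log|x/y|$. Differentiating in $x$ then gives the lemma even without the non-negativity hypothesis.
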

\begin{proof}
We denote the Eisenstein series of weight $2$ by $G_2(z)$, which is defined for every $z\in \mathcal{H}$ by the following double sum 
	\begin{align}
		G_2(z)= {\sum_{m,n \in \Z}}{}^{{}^{\hspace{.1cm}'}} \  \frac{1}{(mz+n)^{2}},\nonumber
	\end{align}
where $'$ on the summation sign means that the term $m = n = 0$ is to be omitted. It is a modular integral of weight $2$, since 
it satisfies
	\begin{align}\label{G_relation}
		G_2|_2T(z)=G_2(z)\qquad \text{and}\qquad G_2|_2 S(z)=G_2(z) -\frac{2\pi i}{z}.
	\end{align}
The relation \eqref{T_relation}, satisfied by the operator $\widetilde{T}_n$, yields
	\begin{align}
		G_2|_2(1-S)\widetilde{T}_n=G_2|_2T_n^{\infty}(1-S)+G_2|_2(1-T)Y.\nonumber
	\end{align}
Therefore, by applying \eqref{G_relation}, the above equation reduces to 
	\begin{align}\label{rational_function_relation}
		\left(\frac{2\pi i}{z}\right)|_2\widetilde{T}_n=G_2|_2T_n^{\infty}(1-S).
	\end{align}
Employing the fact [cf. \cite[p. 54]{Knopp1}, \cite[Proposition 13]{Serre}] that the function $G_2(z)$ is a Hecke eigen form with respect to the standard Hecke operator $T_n^{\infty}$, corresponding to the eigen value $\sigma_1(n)$, we can transform \eqref{rational_function_relation} as 
	\begin{align}
			\left(\frac{2\pi i}{z}\right)|_2\widetilde{T}_n&=\sigma_1(n)G_2|_2(1-S)\nonumber\\
			&= \sigma_1(n)\left(\frac{2\pi i}{z}\right),\nonumber
	\end{align}
where the last step follows from \eqref{G_relation}. Finally, multiplying $(2\pi i)^{-1}$ on the both side of the above equation, we conclude our result.
\end{proof}

We are now ready to prove Theorem \ref{Multi-term-functional-equation}. 
\subsection{Proof of Theorem \ref{Multi-term-functional-equation}}
The action of the standard slash operator on the function $\psi_s^+$ with spectral parameter $\frac{s+1}{2}$ derives \eqref{Choie_Kumar_Theorem} as
\begin{align}
			\sum_{\gamma} v_{\gamma} \frac{1}{(cx+d)^{s+1}} \psi^+_{\frac{s+1}{2}}\left(\frac{ax+b}{cx+d}\right)=	\sum_{\ell \mid n}\frac{1}{\ell^s} \psi^+_{\frac{s+1}{2}}(x).\nonumber
		\end{align}
Thus, it follows from Lemma \ref{F-relation-derivative-Phi} that
\begin{align}\label{main_eq_after_F_s}
\sum_{\gamma} v_{\gamma}&\frac{1}{(cx+d)^{s+1}} \Phi_x\left(s, \frac{ax+b}{cx+d}\right) - \sum_{\ell\mid n}\frac{1}{\ell^s} \Phi_x(s,x) = \sum_{\gamma}v_{\gamma}\frac{\z(s)}{(ax+b)^s (cx+d)}-\sum_{\ell \mid n} \frac{\z(s)}{(\ell x)^s}\nonumber\\
&\, \, +  \frac{s\zeta(s+1)}{2}\left(\sum_{\gamma} \frac{v_{\gamma}}{(cx+d)^{s+1}}\left(1-\left(\frac{ax+b}{cx+d}\right)^{-(s+1)}\right)-\sum_{\ell \mid n} \frac{1}{\ell^{s}}\left(1-x^{-(s+1)}\right)\right).
\end{align}
Expanding the standard slash action on the function $\psi^-_{s}(x)$ with spectral parameter $\frac{s+1}{2}$ in Theorem \ref{eigenform_1st_example_lewis_zagier}, we obtain
\begin{align}
	\sum_{\gamma} \frac{v_{\gamma}}{(cx+d)^{s+1}}\left(1-\left(\frac{ax+b}{cx+d}\right)^{-(s+1)}\right)=\sum_{\ell \mid n} \frac{1}{\ell^{s}}\left(1-x^{-(s+1)}\right).\nonumber
\end{align}
Thus, by applying the above identity, we can reduce \eqref{main_eq_after_F_s} as 
\begin{align}\label{Reduced_equation_after_F_s}
		\sum_{\gamma} v_{\gamma}\frac{1}{(cx+d)^{s+1}} \Phi_x\left(s, \frac{ax+b}{cx+d}\right) - \sum_{\ell\mid n}\frac{1}{\ell^s} \Phi_x(s,x)=\sum_{\gamma}v_{\gamma}\frac{\z(s)}{(ax+b)^s (cx+d)}-\sum_{\ell \mid n} \frac{\z(s)}{(\ell x)^s}.
\end{align}
Now, we differentiate both side of the above equation $k-1$ times and then evaluate the limit as $s \to 1$. The left-hand side of \eqref{Reduced_equation_after_F_s} transforms to
\begin{align}\label{main_LHS}
&\lim_{s \to 1} \bigg[\sum_{\gamma} v_{\gamma}\frac{\partial^{k-1}}{\partial s^{k-1}}\left[ \frac{1}{(cx+d)^{s+1}} \Phi_x\left(s, \frac{ax+b}{cx+d}\right) \right] - \sum_{\ell\mid n} \frac{\partial^{k-1}}{\partial s^{k-1}}\left[ \frac{1}{\ell^s} \Phi_x(s,x)\right]\bigg]\nonumber\\
	&=\lim_{s \to 1}\Bigg[ \sum_{r=0}^{k-1}\binom{k-1}{r}\bigg[\sum_{\gamma}v_{\gamma} \frac{(-1)^{k-1-r}\log^{k-1-r}(cx+d)}{(cx+d)^{s+1}}\frac{\partial^r}{\partial s^r}\left(\Phi_x\left(s,\frac{ax+b}{cx+d}
	\right)\right)\nonumber\\
	&\hspace{5.5cm}-\sum_{\ell \mid n} \frac{(-1)^{k-1-r}\log^{k-1-r}(\ell)}{\ell^s}\frac{\partial^r}{\partial s^r}\left(\Phi_x\left(s,x
	\right)\right)\bigg]\Bigg]\nonumber\\
	&=\sum_{r=0}^{k-1} \binom{k-1}{r} \frac{(-1)^{k}}{r+1} \left[\sum_{\gamma} v_{\gamma} \frac{\log^{k-1-r}(cx+d)}{(cx+d)^2}\Phi'_{r+1}\left(\frac{ax+b}{cx+d}\right)-\sum_{\ell \mid n} \frac{\log^{k-1-r}(\ell)}{\ell}\Phi'_{r+1}(x)\right],
\end{align}
where in the penultimate step, we have applied Leibnitz rule of differentiation and the last step follows from Lemma \ref{Dixit's-Lemma}.

Employing Lemma \ref{Pole_Cancellation} along with the {\em identity theorem} for analytic functions, we obtain that for $x>0$
\begin{align}\label{Cancelled pole}
	\sum_{\g} \frac{v_{\g}}{(ax+b)(cx+d)}=\sum_{\ell\mid n} \frac{1}{\ell x},
\end{align}
which turns the right hand side of \eqref{Reduced_equation_after_F_s} into 
\begin{align}\label{Main_RHS}
	 \lim_{s \to 1}\Bigg[\sum_{\gamma}v_{\gamma}\frac{1}{cx+d}\frac{\partial^{k-1}}{\partial s^{k-1}}\left(\frac{\z(s)}{(ax+b)^s}\right)-\sum_{\ell \mid n} \frac{\partial^{k-1}}{\partial s^{k-1}}\left(\frac{\z(s)}{(\ell x)^s}\right)\Bigg] = L_1-L_2,
\end{align}
		where 
		\begin{align}
		&L_1:= \lim_{s \to 1}\sum_{\gamma}v_{\gamma}\frac{1}{cx+d}\left[\frac{\partial^{k-1}}{\partial s^{k-1}}\left(\frac{\z(s)}{(ax+b)^s}\right)-\frac{(-1)^{k-1}(k-1)!(ax+b)^{-1}}{(s-1)^{k}}\right],\nonumber\\
		&L_2:=\lim_{s \to 1}\sum_{\ell \mid n}\left[ \frac{\partial^{k-1}}{\partial s^{k-1}}\left(\frac{\z(s)}{(\ell x)^s}\right)-\frac{(-1)^{k-1}(k-1)!(\ell x)^{-1}}{(s-1)^{k}}\right].\nonumber
		\end{align}
We next concentrate in evaluating the limits $L_1$ and $L_2$, which is based on the Cauchy product of the corresponding Laurent series at $s=1$ for the functions $\zeta(s)$ and $y^{-s}$ for $y>0$. The  Laurent series expansions
		 \begin{align*}
			\zeta(s)=\frac{1}{s-1}+\sum_{n=0}^{\infty} \frac{(-1)^n\gamma_n}{n!} (s-1)^n , \qquad
			y^{-s}=\frac{1}{y}\sum_{n=0}^{\infty} \frac{(-1)^n\log^n(y)}{n!} (s-1)^n
		\end{align*}
evaluate the following limit as
			\begin{multline}
			\lim_{s \to 1}\bigg[\frac{\partial^{k-1}}{\partial s^{k-1}}\left(	y^{-s}\zeta(s)\right)-\frac{(-1)^{k-1} (k-1)!\, y^{-1}}{(s-1)^{k}}\bigg]\\
			=\frac{(-1)^{k}\log^{k}(y)}{ky}+\frac{(-1)^{k-1}}{y}\sum_{r=0}^{k-1}\binom{k-1}{r} \gamma_{k-1-r} \log^{r}(y).\nonumber		
		\end{multline}
Thus, we can write
		\begin{align}
		&	L_1= \sum_{\gamma}v_{\gamma}\bigg[\frac{(-1)^{k}\log^{k}(ax+b)}{k(ax+b)(cx+d)}+\frac{(-1)^{k-1}}{(ax+b)(cx+d)}\sum_{r=0}^{k-1}\binom{k-1}{r}\gamma_{k-1-r}\log^{r}(ax+b)\bigg],\nonumber\\
		& L_2= \sum_{\ell \mid n} \bigg[\frac{(-1)^{k}\log^{k}(\ell x)}{k\ell x}+\frac{(-1)^{k-1}}{\ell x}\sum_{r=0}^{k-1}\binom{k-1}{r}\gamma_{k-1-r}\log^{r}(\ell x)\bigg].\nonumber
		\end{align}
Inserting the above limits $L_1$ and $L_2$ into \eqref{Main_RHS} and then applying \eqref{Reduced_equation_after_F_s}, we equate the equations \eqref{main_LHS} and \eqref{Main_RHS} to obtain 
		\begin{multline}
			\sum_{r=0}^{k-1} \binom{k-1}{r} \frac{1}{r+1} \left[\sum_{\gamma} v_{\gamma} \frac{\log^{k-1-r}(cx+d)}{(cx+d)^2}\Phi'_{r+1}\left(\frac{ax+b}{cx+d}\right)-\sum_{\ell \mid n} \frac{\log^{k-1-r}(\ell)}{\ell}\Phi'_{r+1}(x)\right]\nonumber\\
			= -\sum_{r=0}^{k-1}\binom{k-1}{r} \gamma_{k-1-r} \left[\sum_{\gamma} v_{\gamma}\frac{\log^{r}(ax+b)}{(ax+b)(cx+d)}-\sum_{\ell \mid n}\frac{\log^{r}(\ell x)}{\ell x}\right]\nonumber\\
			+\frac{1}{k}\Bigg[\sum_{\gamma} v_{\gamma}\frac{\log^{k}(ax+b)}{(ax+b)(cx+d)}-\sum_{\ell \mid n}\frac{\log^{k}(\ell x)}{\ell x}\Bigg].
		\end{multline}
Note that, the term corresponding to $r=0$ of the first sum on the right hand side of the above equation vanishes, which follows from \eqref{Cancelled pole}. Multiplying both side by $k$ and then substituting $r$ by $r-1$ on the left hand side, we can write
				\begin{multline}
				\sum_{r=1}^{k} \binom{k}{r} \left[\sum_{\gamma} v_{\gamma} \frac{\log^{k-r}(cx+d)}{(cx+d)^2}\Phi'_{r}\left(\frac{ax+b}{cx+d}\right)-\sum_{\ell \mid n} \frac{\log^{k-r}(\ell)}{\ell}\Phi'_{r}(x)\right]\\
				= -\sum_{r=1}^{k-1}\binom{k}{r} (k-r) \gamma_{k-1-r} \left[\sum_{\gamma} v_{\gamma}\frac{\log^{r}(ax+b)}{(ax+b)(cx+d)}-\sum_{\ell \mid n}\frac{\log^{r}(\ell x)}{\ell x}\right]\\
				+\Bigg[\sum_{\gamma} v_{\gamma}\frac{\log^{k}(ax+b)}{(ax+b)(cx+d)}-\sum_{\ell \mid n}\frac{\log^{k}(\ell x)}{\ell x}\Bigg].\nonumber
			\end{multline}	
Finally, an application of the general slash operator, defined in \eqref{genslash}, leads to the conclusion. This completes the proof of the theorem.
\qed	 	 
\subsection*{Acknowledgements} The first author’s research was partially supported by Anusandhan National Research
		 	 Foundation (ANRF) grant ANRF/ARGM/2025/000175/MTR of Govt. of India and the Cumulative Professional Development Allowance (CPDA) grant from the affiliated institute. The second author is currently a Ph.D student at IIT Kharagpur and her research was supported by University Grants Commision (UGC), Govt. of India.

\end{document}